\documentclass[10pt,leqno]{amsart}
\usepackage{amsmath}
\usepackage{amsfonts}
\usepackage{amssymb}
\usepackage{graphicx}
\usepackage{color}
\usepackage{hyperref}
\parindent 15pt
\parskip 4pt

\usepackage{xcolor}
\newtheorem{theorem}{Theorem}[section]
\newtheorem*{theorem*}{Theorem}
\newtheorem{lemma}[theorem]{Lemma}
\newtheorem{corollary}[theorem]{Corollary}
\newtheorem{proposition}[theorem]{Proposition}

\theoremstyle{definition}

\newtheorem{example}[theorem]{Example}


\newcommand{\R}{\mathbb{R}}

\def \b {\beta}

\def\Ric{\text{Ric}}

\def\a{\alpha}
\def\l{\lambda}

\def\R{\mathbb{R}}
\def\Z{\mathbb{Z}}

\def\vp{\varphi}

\def\id{\operatorname{id}}
\def\Ric{\operatorname{Ric}}

\def\tr{\operatorname{tr}}

\def\spn{\operatorname{span}}

  

\numberwithin{equation}{section}
\makeatletter
\newcommand*\owedge{\mathpalette\@owedge\relax}
\newcommand*\@owedge[1]{%
  \mathbin{%
    \ooalign{%
      $#1\m@th\bigcirc$\cr
      \hidewidth$#1\m@th\wedge$\hidewidth\cr
    }%
  }%
}
\makeatother

\DeclareMathOperator{\spann}{span}

\begin{document}

\title[The curvature operator of the second kind in 3D]{The curvature operator of the second kind in dimension three}

\author[Fluck]{Harry Fluck}
\address{Department of Mathematics, Cornell University \\ Ithaca, NY 14853, USA}
\email{hpf5@cornell.edu}

\author[Li]{Xiaolong Li}\thanks{The second author's research is partially supported by NSF LEAPS-MPS \#2316659, Simons Collaboration Grant \#962228, and a start-up grant at Wichita State University}
\address{Department of Mathematics, Statistics and Physics, Wichita State University, Wichita, KS, 67260, USA}
\email{xiaolong.li@wichita.edu}

\subjclass[2020]{53E20, 53C21}

\keywords{Ricci flow, Curvature operator of the second kind}

\begin{abstract}
This article aims to understand the behavior of the curvature operator of the second kind under the Ricci flow in dimension three.
First, we express the eigenvalues of the curvature operator of the second kind explicitly in terms of that of the curvature operator (of the first kind). Second, we prove that $\a$-positive/$\a$-nonnegative curvature operator of the second kind is preserved by the Ricci flow in dimension three for all $\a \in [1,5]$.
\end{abstract}
\maketitle

\section{Introduction}

The Riemann curvature tensor $R_{ijkl}$ on an $n$-dimensional Riemannian manifold $(M^n,g)$ naturally induces two self-adjoint curvature operators: $\hat{R}$ acts on the space of two-forms $\wedge^2(T_pM)$ via 
\begin{equation*}
    \hat{R}(\omega)_{ij} =\frac{1}{2}\sum_{k,l=1}^n R_{ijkl}\omega_{kl},
\end{equation*}
and $\overline{R}$ acts on the space of symmetric two-tensors $S^2(T_pM)$ via 
\begin{equation*}
\overline{R}(\vp)_{ij} =\sum_{k,l=1}^n R_{iklj}\vp_{kl}.
\end{equation*}
In the literature, $\hat{R}$ is known as the curvature operator and there are many remarkable results under various positivity conditions on $\hat{R}$; see Meyer \cite{Meyer71}, Gallot and Meyer \cite{GM75}, Tachibana \cite{Tachibana74}, Hamilton \cite{Hamilton82, Hamilton86}, B\"ohm and Wilking \cite{BW08}, Brendle and Schoen \cite{BS08, BS09}, Andrews and Nguyen \cite{AN09}, Ni and Wilking \cite{NW10}, Petersen and Wink \cite{PW21, PW21Crelle, PW22}, etc.  
In particular, the celebrated differentiable sphere theorem states that closed manifolds with two-positive curvature operator are diffeomorphic to spherical space forms. This is proved using the Ricci flow by Hamilton \cite{Hamilton82} for $n=3$, Hamilton \cite{Hamilton86} and Chen \cite{Chen91} for $n=4$, and B\"ohm and Wilking \cite{BW08} for $n\geq 5$. Here, $\hat{R}$ is two-positive if the sum of the smallest two eigenvalues of $\hat{R}$ is positive, and $(M^n,g)$ is said to have two-positive curvature operator if $\hat{R}_p$ is two-positive at every $p\in M$. 
The corresponding classification of closed manifolds with two-nonnegative curvature operator is due to Hamilton \cite{Hamilton86} for $n=3$, Hamilton \cite{Hamilton86} and Chen \cite{Chen91} for $n=4$, and Ni and Wu \cite{NW07} for $n\geq 5$. We refer the reader to the wonderful surveys \cite{Wilking07}, \cite{BS09Survey}, and \cite{Ni14} for more information. 

\textit{The curvature operator of the second kind}, denoted by $\mathring{R}$ throughout this article, refers to the restriction of $\overline{R}$ to $S^2_0(T_pM)$, the space of traceless symmetric two-tensors. Nishikawa \cite{Nishikawa86} interpreted $\mathring{R}$ as the symmetric bilinear form 
$$\mathring{R}: S^2_0(T_pM) \times S^2_0(T_pM) \to \R$$ 
obtained by restricting $\overline{R}$ to $S^2_0(T_pM)$. He called $\mathring{R}$ the curvature operator of the second kind, to distinguish it from the curvature operator $\hat{R}$, which he called the curvature operator of the first kind. It was pointed out in \cite{NPW22} that the curvature operator of the second kind can also be interpreted as the self-adjoint operator 
$$\mathring{R}=\pi \circ \overline{R} :S^2_0(T_pM) \to S^2_0(T_pM),$$ 
where $\pi:S^2(T_pM) \to S^2_0(T_pM)$ is the projection map. The algebraic reason to restrict to $S^2_0(T_pM)$, as pointed out by Bourguignon and Karcher \cite{BK78}, is that $S^2(T_pM)$ is not irreducible under the action of the orthogonal group $O(T_pM)$. Indeed, it splits into $O(T_pM)$-irreducible subspaces as 
$$S^2(T_pM) =S^2_0(T_pM) \oplus \R g.$$ 
Geometrically, $\mathring{R}=\pi \circ \overline{R}=\id_{S^2_0}$ on the standard sphere $\mathbb{S}^n$ with constant sectional curvature $1$, while the operator $\overline{R}$ is not even positive: the eigenvalues of $\overline{R}$ on $\mathbb{S}^n$ are given by $-(n-1)$ with multiplicity one and $1$ with multiplicity $\frac{(n-1)(n+2)}{2}$ (see \cite{BK78}).

Recently, the curvature operator of the second kind $\mathring{R}$ received attention due to the resolution of Nishikawa's 1986 conjecture, which states that a closed Riemannian manifold with positive (respectively, nonnegative) curvature operator of the second kind is diffeomorphic to a spherical space form (respectively, Riemannian locally symmetric space). 
The positive part was resolved by Cao, Gursky, and Tran \cite{CGT21}. Their key observation is that two-positivity of $\mathring{R}$ implies the strict PIC1 condition introduced by Brendle \cite{Brendle08}, i.e., $M\times \R$ has positive isotropic curvature. The positive part of Nishikawa's conjecture then follows from Brendle's convergence result \cite{Brendle08} stating that the normalized Ricci flow evolves an initial metric that is strictly PIC1 into a limit metric with constant positive sectional curvature. 
Shortly after, the second named author \cite{Li21} noticed that strictly PIC1 is implied by three-positivity of $\mathring{R}$, thus getting an improvement of the result of Cao, Gursky, and Tran.
He also settled the nonnegative part of Nishikawa's conjecture by reducing it to the locally irreducible case and appealing to the classification of closed locally irreducible manifolds with weakly PIC1 in \cite{Brendle10book}.  

After that, further investigations toward understanding $\mathring{R}$ have been carried out in \cite{NPW22}, \cite{NPWW22},  \cite{Li22JGA, Li22PAMS, Li22Kahler, Li22product}, and \cite{ZZ22}. The second named author \cite{Li22JGA} proved that closed manifolds with $4\frac{1}{2}$-positive $\mathring{R}$ have positive isotropic curvature and positive Ricci curvature, thus being homeomorphic to spherical space forms because of the work of Micallef and Moore \cite{MM88}. This improves a result of Cao, Gursky, and Tran \cite[Theorem 1.6]{CGT21} assuming $4$-positivity of $\mathring{R}$. 
Using the Bochner technique, Nienhaus, Petersen, and Wink \cite{NPW22} proved vanishing results on the Betti numbers $b_p$ under $C(p,n)$-positivity of $\mathring{R}$, where $C(p,n)$ is an explicit constant. In particular, it follows that a closed Riemannian $n$-manifold with $\frac{n+2}{2}$-nonnegative $\mathring{R}$ is either flat or a rational homology sphere.
Together with Wylie \cite{NPWW22}, they observed that a Riemannian $n$-manifold with $n$-nonnegative or $n$-nonpositive $\mathring{R}$ has restricted homology $SO(n)$ unless it is flat. Subsequently, the second named author obtained a sharper result in his investigation of $\mathring{R}$ on product manifolds \cite{Li22product}.  
In addition, $\mathring{R}$ was investigated on K\"ahler manifold in \cite{Li21}, \cite{Li22PAMS}, \cite{NPWW22}, and  \cite{Li22Kahler}. It is proved in \cite{Li22Kahler} that a K\"ahler manifold of complex dimension $m$ with $\a$-nonnegative $\mathring{R}$ must be flat if $\a < \frac{3}{2}(m^2-1)$. This is sharp as $\mathbb{CP}^m$ with the Fubini-Study metric has $\frac{3}{2}(m^2-1)$-nonnegative $\mathring{R}$. 
Another result in \cite{Li22Kahler} asserts that a closed K\"ahler manifold of complex dimension $m$ with $\left(\frac{3m^3-m+2}{2m} \right)$-positive $\mathring{R}$ has positive orthogonal bisectional curvature, thus being biholomorphic to $\mathbb{CP}^m$. In another direction, Zhao and Zhu \cite{ZZ22} proved that a steady gradient Ricci soliton of dimension $n\geq 4$ must be isometric to the Bryant soliton up to scaling if it is asymptotically cylindrical and satisfies $\overline{R}>-\frac{S}{2} \id_{S^2(T_pM)}$, where $S$ denotes the scalar curvature. This improves a result of Brendle \cite{Brendle14} assuming the stronger assumption of positive sectional curvature.

In this article, we investigate the curvature operator of the second kind in dimension three. 
Our first result finds explicit expressions for the eigenvalues of $\mathring{R}$ in terms of that of $\hat{R}$ in this dimension.  
\begin{theorem}\label{thm eigenvalue 3D}
Let $R\in S^2_B(\wedge^2 \R^3)$ be an algebraic curvature operator on $\R^3$ and denote by $a\leq b \leq c$ the eigenvalues of the curvature operator $\hat{R}$. Then the eigenvalues of the curvature operator of the second kind $\mathring{R}$ are given by 
\begin{equation*}
    \l_{-} \leq a\leq b \leq c \leq  \l_{+},
\end{equation*}
where
\begin{equation}\label{def lambda pm}
    \l_{\pm}:= \frac{a+b+c}{3} \pm \frac{\sqrt{2}}{3}\sqrt{3(a^2+b^2+c^2)-(a+b+c)^2}.
\end{equation}
\end{theorem}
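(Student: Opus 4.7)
The plan is to reduce everything to a concrete linear-algebra computation in a well-chosen basis. Since $\dim \wedge^2\R^3 = 3$, the Hodge star gives an $O(3)$-equivariant isomorphism $\wedge^2\R^3 \cong \R^3$; consequently the symmetric operator $\hat R$ can be diagonalized by a basis of the form $\{e_i\wedge e_j\}_{i<j}$ arising from an orthonormal basis $\{e_1,e_2,e_3\}$ of $\R^3$. The three eigenvalues $a\le b\le c$ of $\hat R$ are then the sectional curvatures of the coordinate planes, and this diagonalization forces $R_{ijkl}=0$ whenever $\{i,j\}\neq\{k,l\}$ as unordered pairs.

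In this basis, decompose
\[
S^2_0(\R^3) = V_{\mathrm{diag}} \oplus V_{\mathrm{off}},
\]
where $V_{\mathrm{diag}}$ is the $2$-dimensional space of traceless diagonal symmetric tensors and $V_{\mathrm{off}} = \mathrm{span}\{e_i\odot e_j : i<j\}$ has dimension $3$. A short direct computation using the restricted set of nonzero $R_{ijkl}$ shows that $\overline R$ preserves this decomposition and that each $e_i\odot e_j$ is an eigentensor of $\overline R$ with eigenvalue equal to the sectional curvature of the $(ij)$-plane; since off-diagonal symmetric tensors are already traceless, these remain eigentensors of $\mathring R=\pi\circ\overline R$, accounting for three of the five eigenvalues, namely $a$, $b$, $c$.

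For the remaining two eigenvalues, choose any orthonormal basis of $V_{\mathrm{diag}}$, for instance
\[
f_1 = \tfrac{1}{\sqrt 2}(e_1\otimes e_1 - e_2\otimes e_2), \qquad f_2 = \tfrac{1}{\sqrt 6}(e_1\otimes e_1 + e_2\otimes e_2 - 2e_3\otimes e_3),
\]
and compute the $2\times 2$ matrix $M$ of $\mathring R|_{V_{\mathrm{diag}}}$ by evaluating its four entries. The resulting trace and determinant are symmetric functions of $a,b,c$:
\[
\mathrm{tr}(M) = \tfrac{2}{3}(a+b+c), \qquad \det(M) = \tfrac{1}{3}\bigl((a+b+c)^2 - 2(a^2+b^2+c^2)\bigr),
\]
and solving the $2\times 2$ characteristic equation yields precisely the formula \eqref{def lambda pm} for $\lambda_\pm$.

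Finally, the ordering $\lambda_-\le a\le b\le c\le\lambda_+$ reduces to an elementary algebraic check. Using the identity $3(a^2+b^2+c^2)-(a+b+c)^2 = (a-b)^2+(b-c)^2+(a-c)^2$, the inequality $\lambda_+\ge c$ is equivalent (after isolating the square root and squaring, which is legitimate because $2c-a-b\ge 0$) to $3(a-b)^2\ge 0$; by the analogous argument $\lambda_-\le a$ reduces to $3(b-c)^2\ge 0$. The only real difficulty in the proof is the bookkeeping in computing $\overline R$ on both blocks; the essential dimension-three input is the Hodge-star diagonalization of $\hat R$, which has no analogue in higher dimensions.
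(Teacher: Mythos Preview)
Your argument is correct, and it takes a genuinely different route from the paper's. The paper first proves a general diagonalization theorem (Theorem~\ref{thm diagonilization}) for the curvature operator of the second kind of any $A\owedge\id$ on $\R^n$: Lemma~\ref{lemma 3.1} handles the off-diagonal block exactly as you do, but the diagonal block is treated via Lemma~\ref{lemma 3.2} by reducing to the algebraic equation $\sum_i \frac{n_i\mu_i}{2\mu_i-\lambda}=\frac{n}{2}$. Theorem~\ref{thm eigenvalue 3D} is then deduced by writing $R=A\owedge g$ with $A=\Ric-\tfrac{S}{4}g$ the Schouten tensor, computing the eigenvalues $\tfrac{1}{2}(a+b-c),\tfrac{1}{2}(a+c-b),\tfrac{1}{2}(b+c-a)$ of $A$, and specializing the general formula. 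Your proof bypasses the Schouten tensor entirely: you diagonalize $\hat R$ directly by the Hodge star, observe that this forces the off-diagonal curvatures to vanish, and then compute the $2\times 2$ block on $V_{\mathrm{diag}}$ by brute force via its trace and determinant. (Your basis $f_1,f_2$ is exactly the $\vp_4,\vp_5$ the paper uses later in the proof of Proposition~\ref{prop 3.3}, so the entries are consistent with \cite[Lemma~3.1]{Li21}.) Your route is shorter and more elementary for this specific statement; the paper's route yields a result valid for any conformally flat curvature tensor in arbitrary dimension, which is of independent interest even though only the $n=3$ case is used downstream.
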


It is well-known that both kinds of curvature operators are stronger than sectional curvature, in the sense that $\hat{R} \geq \kappa$ or $\mathring{R} \geq \kappa$ for some $\kappa \in \R$ implies that the sectional curvatures are bounded from below by $\kappa$ (see for example \cite{Nishikawa86}). 
In dimension three, $\hat{R} \geq 0$ is equivalent to nonnegative sectional curvature, and two-nonnegativity of $\hat{R}$ is equivalent to nonnegative Ricci curvature. In all dimensions, two-nonnegativity of $\mathring{R}$ implies nonnegative sectional curvature (see \cite{Li21}) and $(n+\frac{n-2}{n})$-nonnegativity of $\mathring{R}$ implies nonnegative Ricci curvature (see \cite{Li22JGA}). Here $\mathring{R}$ is said to be $\a$-nonnegative for some $\a \in [1,\dim(S^2_0(T_pM))]$ if the eigenvalues  $\l_1 \leq \l_2 \leq \cdots \leq \l_{\frac{(n-1)(n+2)}{2}}$ of $\mathring{R}$ satisfies
\begin{equation*}
    \l_1 +\cdots + \l_{\lfloor \a \rfloor} +(\a -\lfloor \a \rfloor)\l_{\lfloor \a \rfloor+1} \geq 0, 
\end{equation*}
where $\lfloor x \rfloor$ denotes the floor function defined by 
$$\lfloor x \rfloor := \max \{k \in \Z: k \leq x\}.$$
When $\a=k$ is an integer, this agrees with the usual definition meaning that the sum of the smallest $k$ eigenvalues of $\mathring{R}$ is nonnegative. The $\a$-positivity of $\mathring{R}$ is defined similarly. Moreover, we say $\mathring{R}$ is $\a$-nonpositive (resp, $\a$-negative) if $-\mathring{R}$ is $\a$-nonnegative (resp, $\a$-positive). We always omit $\a$ when $\a=1$. 

Clearly, $\a$-positive $\mathring{R}$ implies $\b$-positive $\mathring{R}$ if $\a \leq \b$. Therefore, $\a$-positive $\mathring{R}$ for $\a \in [1, \frac{(n-1)(n+2)}{2}]$ provides a family of curvature conditions interpolating between positive curvature operator of the second kind (corresponding to $\a=1$) and positive scalar curvature (corresponding to $\a=\frac{(n-1)(n+2)}{2})$. In seeking optimal $\a$-positivity of $\mathring{R}$ that characterizes the spherical space form, the second named author \cite{Li22JGA} conjectured that a closed Riemannian manifold with $\left(n+\frac{n-2}{n}\right)$-positive curvature operator of the second kind is diffeomorphic to a spherical space form, after verifying it in dimensions three and four. The number $n+\frac{n-2}{n}$ comes from the standard cylinder $\mathbb{S}^{n-1}\times \mathbb{S}^1$, which has $\alpha$-positive $\mathring{R}$ if $\a > n+\frac{n-2}{n}$.

With the aid of Theorem \ref{thm eigenvalue 3D}, we get some new and optimal results in dimension three. We summarize (noting that parts (1), (3), and (5) have been proved by the second named author in \cite{Li21, Li22JGA}) that
\begin{proposition}\label{prop 3D curvature}
Let $R\in S^2_B(\wedge^2 \R^3)$ be an algebraic curvature operator and denote by $\hat{R}$ and 
$\mathring{R}$ the curvature operator of the first and the second kind of $R$ respectively. 
Then the following statements hold. 
\begin{enumerate}
    \item $\mathring{R}$ is two-nonnegative $\implies$ $R$ has nonnegative sectional curvature.
    \item $R$ has nonnegative sectional curvature $\implies$ $\mathring{R}$ is $3\frac{1}{3}$-nonnegative.
    \item $\mathring{R}$ is $3\frac{1}{3}$-nonnegative $\implies$ $R$ has nonnegative Ricci curvature
    \item $R$ has nonnegative Ricci curvature $\implies$ $\mathring{R}$ is four-nonnegative 
   \item  $R$ has nonnegative scalar curvature $\iff$ $\mathring{R}$ is five-nonnegative. 
\end{enumerate}
Moreover, the same results hold if ``nonnegative" is replaced by ``positive", ``negative", or ``non-positive''. 
\end{proposition}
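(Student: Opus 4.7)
The plan is to apply Theorem~\ref{thm eigenvalue 3D} to translate each assertion into an inequality among the eigenvalues $a\le b\le c$ of $\hat R$. In dimension three, the sectional curvatures are bounded below by $a$, the smallest Ricci eigenvalue is $a+b$, and the scalar curvature equals $2(a+b+c)$; by Theorem~\ref{thm eigenvalue 3D} the five eigenvalues of $\mathring R$ in increasing order are $\lambda_-\le a\le b\le c\le \lambda_+$ with $\lambda_\pm$ as in \eqref{def lambda pm}. I will repeatedly use the identities $\lambda_-+\lambda_+=\tfrac{2}{3}(a+b+c)$ and $3(a^2+b^2+c^2)-(a+b+c)^2=(a-b)^2+(b-c)^2+(a-c)^2$.

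Parts (1) and (3) are already established in \cite{Li21, Li22JGA}. Part (5) follows from the trace identity $\lambda_-+a+b+c+\lambda_+=\tfrac{5}{3}(a+b+c)$, which makes 5-nonnegativity of $\mathring R$ equivalent to $a+b+c\ge 0$, i.e., nonnegative scalar curvature. For parts (2) and (4), I would write out the relevant partial sum of eigenvalues of $\mathring R$ and reduce the claim to an inequality of the form $L(a,b,c)\ge\sqrt 2\sqrt{(a-b)^2+(b-c)^2+(a-c)^2}$ under the given hypothesis. For (2), $3\tfrac{1}{3}$-nonnegativity is $\lambda_-+a+b+\tfrac{1}{3}c\ge 0$, giving $L=4a+4b+2c$; under $a\ge 0$ one has $a,b,c\ge 0$, so $L\ge 0$ and squaring reduces the claim to $12a^2+12b^2+36ab+20ac+20bc\ge 0$, which is manifest. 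For (4), 4-nonnegativity is $\lambda_-+a+b+c\ge 0$, giving $L=4(a+b+c)$; the hypothesis $a+b\ge 0$ combined with $a\le b\le c$ yields $a+c\ge 0$ and $b+c\ge 0$, hence $L\ge 0$, and squaring reduces the claim to
\begin{equation*}
(a+b+c)^2+(ab+bc+ca)\ \ge\ 0.
\end{equation*}

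This last inequality is the main obstacle. When $a\ge 0$ it is obvious. In the remaining case $a<0\le a+b$ I would set $a=-t$, $b=t+u$, $c=t+u+v$, which forces $t>0$ and $u,v\ge 0$ from $a+b\ge 0$ and $a\le b\le c$; substituting turns the left side into $5u^2+v^2+4tu+2tv+5uv$, which is manifestly nonnegative. The positive versions follow by replacing $\ge$ by $>$ throughout these computations. The nonpositive and negative versions reduce to the nonnegative and positive ones via $R\mapsto -R$: this sends $(a,b,c)\mapsto(-c,-b,-a)$ and $(\lambda_-,\lambda_+)\mapsto(-\lambda_+,-\lambda_-)$, thereby identifying $\alpha$-nonpositivity of $\mathring R$ with $\alpha$-nonnegativity of the curvature operator of the second kind of $-R$.
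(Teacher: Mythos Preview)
Your proposal is correct and follows essentially the same approach as the paper: both proofs invoke Theorem~\ref{thm eigenvalue 3D} to reduce parts (2) and (4) to elementary inequalities in $a,b,c$, then verify those inequalities under the hypotheses $a\ge 0$ and $a+b\ge 0$, respectively, with the nonpositive/negative cases handled via $R\mapsto -R$. The only cosmetic difference is in the verification for (4): the paper groups the expression as $(a+b)^2+(c^2+ab)+3(a+b)c$ and observes each term is nonnegative (since $a+b\ge 0$ gives $c\ge b\ge |a|$, hence $c^2+ab\ge 0$), whereas you use the substitution $a=-t$, $b=t+u$, $c=t+u+v$; both arguments are equally short.
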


Our second result states that $\a$-positivity (and $\a$-nonnegativity) of $\mathring{R}$ is preserved by three-dimensional compact Ricci flows for every $\a \in [1,5]$. Recall that a Riemannian manifold $(M^n,g)$ is said to have $\a$-positive curvature operator of the second kind if $\mathring{R}_p$ is $\a$-positive for each $p \in M$.
\begin{theorem}\label{thm RF preserves}
Let $(M^3,g(t))$, $t\in [0,T)$, be a three-dimensional compact Ricci flow. Let $\a \in [1,5]$. If $g(0)$ has $\a$-positive (respectively, $\a$-nonnegative) curvature operator of the second kind, then $g(t)$ has $\a$-positive (respectively, $\a$-nonnegative) curvature operator of the second kind for $t\in (0,T)$.
\end{theorem}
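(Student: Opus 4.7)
The plan is to apply Hamilton's maximum principle for tensors. The set
\begin{equation*}
\Omega_\alpha := \{R \in S^2_B(\wedge^2 \mathbb{R}^3) : \mathring{R} \text{ is } \alpha\text{-nonnegative}\}
\end{equation*}
is closed, convex, and $O(3)$-invariant, because $\mathring{R}$ depends linearly on $R$ and the sum of the smallest $\alpha$ eigenvalues of a self-adjoint operator (with fractional interpolation) is a concave function. By Hamilton's maximum principle, preservation under the Ricci flow reduces to invariance of $\Omega_\alpha$ under the algebraic ODE $\tfrac{d}{dt} R = R^2 + R^{\#}$. In dimension three this ODE diagonalizes simultaneously with $\hat{R}$, and the ordered eigenvalues $a \leq b \leq c$ of $\hat{R}$ satisfy Hamilton's classical scalar system
\begin{equation*}
\dot{a} = a^2 + bc, \qquad \dot{b} = b^2 + ac, \qquad \dot{c} = c^2 + ab.
\end{equation*}

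By Theorem~\ref{thm eigenvalue 3D}, the five eigenvalues of $\mathring{R}$ are $\lambda_- \leq a \leq b \leq c \leq \lambda_+$ with $\lambda_{\pm}$ given by (\ref{def lambda pm}), so $\alpha$-nonnegativity of $\mathring{R}$ becomes an explicit concave inequality $f_\alpha(a,b,c) \geq 0$ where $f_\alpha$ is the appropriate (fractionally-weighted) partial sum of $\lambda_-, a, b, c, \lambda_+$. Concavity of $f_\alpha$ (as a linear function of $(a,b,c)$ minus a positive multiple of $\sqrt{D}$, which is a Euclidean norm of a linear function of $(a,b,c)$) reduces ODE-invariance to the pointwise inequality $\dot{f}_\alpha \geq 0$ on the boundary $\{f_\alpha = 0\}$. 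For the endpoint $\alpha = 5$, we have $f_5 = \tfrac{5}{3}(a+b+c)$ and
\begin{equation*}
\dot{f}_5 = \tfrac{5}{6}\bigl[(a+b)^2 + (b+c)^2 + (a+c)^2\bigr] \geq 0,
\end{equation*}
recovering Hamilton's classical nonnegative scalar-curvature argument.

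For general $\alpha \in [1,5)$, Theorem~\ref{thm eigenvalue 3D} puts $f_\alpha$ in the form $P_\alpha(a,b,c) - Q_\alpha \sqrt{D}$ with $P_\alpha$ an explicit linear function, $Q_\alpha > 0$ depending on the subinterval $[k, k+1]$ containing $\alpha$, and $D := (a-b)^2 + (b-c)^2 + (a-c)^2$. On the boundary $\{f_\alpha = 0\}$ the relation $P_\alpha = Q_\alpha \sqrt{D}$ (in particular $P_\alpha \geq 0$) allows us to square and convert $\dot{f}_\alpha \geq 0$ into a polynomial inequality, most efficiently expressed in the elementary symmetric polynomials $S = a+b+c$, $\sigma = ab+bc+ca$, $p = abc$, whose evolutions
\begin{equation*}
\dot{S} = S^2 - \sigma, \qquad \dot{\sigma} = 2S\sigma - 6p, \qquad \dot{p} = \sigma^2 - Sp
\end{equation*}
follow directly from Hamilton's ODE. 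The main obstacle is verifying this symmetric polynomial inequality uniformly for every $\alpha \in [1,5)$; the bound is sharpest at the degenerate configurations $(a,b,c) \propto (1,1,-2)$, corresponding to the cylinder $\mathbb{S}^2 \times \mathbb{R}$, where the ordering $a \leq b \leq c$ must be exploited to close the estimate. Preservation of strict $\alpha$-positivity then follows by a standard argument using the strict inequality $\dot{f}_\alpha > 0$ away from the degenerate locus together with Hamilton's strong maximum principle.
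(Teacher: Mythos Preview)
Your framework is correct---$\Omega_\alpha$ is closed, convex, and $O(3)$-invariant, and Hamilton's maximum principle reduces the theorem to invariance under the ODE $\dot R = R^2 + R^\sharp$---but the proposal stops precisely at the point where the work begins. You write $f_\alpha = P_\alpha - Q_\alpha\sqrt{D}$, propose to square on the boundary $\{f_\alpha=0\}$ and reduce $\dot f_\alpha \geq 0$ to a symmetric polynomial inequality, and then describe this inequality as ``the main obstacle'' without verifying it. That verification \emph{is} the theorem; what you have written is a plan, not a proof. Moreover, the route you outline is genuinely unpleasant in practice: $f_\alpha$ changes form on each of the four subintervals $[k,k+1)$, differentiating $\sqrt{D}$ introduces a $1/\sqrt{D}$ factor that does not interact cleanly with the boundary constraint, and after squaring one faces a one-parameter family of cubic inequalities in $(a,b,c)$ that must hold on a curved hypersurface depending on $\alpha$.

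The paper avoids this computation entirely by a normalization trick: rather than checking $\dot f_\alpha \geq 0$ on $\{f_\alpha=0\}$, it shows that the ratio $f_\alpha/S$ is monotone non-decreasing along the ODE whenever $S>0$ (the case $S\equiv 0$ being trivial). The point is that on each subinterval of $\alpha$, $f_\alpha/S$ is a nonnegative affine combination of the elementary quotients $a/S$, $(a+b)/S$, $-c/S$, $\lambda_-/S$, $-\lambda_+/S$, each of which is \emph{separately} non-decreasing by a one-line computation; for example $\tfrac{d}{dt}(a/S)=2S^{-2}\bigl(b^2(c-a)+c^2(b-a)\bigr)\geq 0$, while the monotonicity of $\lambda_\pm/S$ reduces to that of $|\Ric|^2/S^2$, whose derivative factors as $-4S^{-3}\bigl(a^2(b-c)^2+b^2(a-c)^2+c^2(a-b)^2\bigr)$. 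This decouples the problem completely and eliminates any squaring or case analysis. It also handles strict $\alpha$-positivity for free via the invariant sets $\{f_\alpha \geq \varepsilon S \geq 0\}$, with no separate strong-maximum-principle argument needed.
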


It remains an interesting question whether the Ricci flow preserves $\a$-positive/$\a$-nonnegative curvature operator of the second kind for some $\a \in [1,N]$ in higher dimensions.

Since $3\frac{1}{3}$-nonnegativity of $\mathring{R}$ implies nonnegative Ricci curvature, one gets classification results for complete three-manifolds with $3\frac{1}{3}$-nonnegative $\mathring{R}$ via the classification results under nonnegative Ricci curvature in \cite{Hamilton82, Hamilton86} in the compact case and \cite{Liu13} in the complete noncompact case. Here, we get a refined version. 
\begin{theorem}\label{thm 3-manifold classification}
Let $(M^3,g)$ be an oriented complete three-dimensional Riemannian manifold with $\a$-nonnegative curvature operator of the second kind.
\begin{enumerate}
    \item If $\a \in [1,3\frac{1}{3})$, then $M$ is either flat or diffeomorphic to a spherical space form. 
    \item If $\a =3\frac{1}{3}$ and $M$ is closed, then $M$ is diffeomorphic to a quotient of one of the spaces $\mathbb{S}^3$ or $\mathbb{S}^2 \times \R$ or $\mathbb{R}^3$ by a group of fixed point free isometries in the standard metrics.
    \item If $\a =3\frac{1}{3}$ and $M$ is noncompact, then either $M$ is diffeomorphic to $\R^3$ or the universal cover of $M$ is isometric to a Riemann product $N^2 \times \R$ where $N^2$ is a complete two-manifold with nonnegative sectional curvature.
    \item If $\a \in (3\frac{1}{3},5]$ and $M$ is closed, then $M$ is either flat or diffeomorphic to a connected sum of spherical space forms and copies of $\mathbb{S}^2 \times \mathbb{S}^1$.
\end{enumerate}
\end{theorem}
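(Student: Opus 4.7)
The plan is to reduce each case to a classification of 3-manifolds under a curvature lower bound---$\Ric\ge 0$ in (1)--(3) via Proposition~\ref{prop 3D curvature}(3), $S\ge 0$ in (4) via Proposition~\ref{prop 3D curvature}(5)---and then to use $\a$-nonnegativity of $\mathring R$ to select the correct topological model from the lists of Hamilton~\cite{Hamilton82,Hamilton86}, Liu~\cite{Liu13}, and the Gromov--Lawson/Schoen--Yau/Perelman theory. The key new input is a pointwise rigidity lemma that refines these classifications in the strict range $\a<10/3$.

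For part (1), I would first establish the rigidity lemma: if $\a\in[1,10/3)$ and $\mathring R_p$ is $\a$-nonnegative, then at $p$ either $\hat R_p=0$ or $\Ric_p$ is positive definite. Let $a\le b\le c$ be the eigenvalues of $\hat R_p$ and suppose the smallest Ricci eigenvalue $a+b$ vanishes. Writing $a=-t$, $b=t$ with $t\ge 0$ and $c\ge t$, Theorem~\ref{thm eigenvalue 3D} gives the five eigenvalues of $\mathring R_p$ as $\l_-,-t,t,c,\l_+$ with $\l_-=\frac{c}{3}-\frac{2}{3}\sqrt{3t^2+c^2}$. Substituting into the defining $\a$-nonnegativity inequality and case-splitting on $\lfloor\a\rfloor\in\{1,2,3\}$, each subrange reduces (after squaring where both sides are visibly nonnegative) to $[9(\a-8/3)^2-4]\,c^2\ge 12\,t^2$; the bracket is strictly negative exactly when $\a<10/3$, which forces $t=c=0$ and hence $\hat R_p=0$. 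The sets $U=\{\hat R=0\}$ (closed) and $V=\{\Ric>0\}$ (open) are disjoint and cover $M$, so connectedness gives $M=U$ (flat) or $M=V$. In the second case, the compact branch is handled by Bonnet--Myers and Hamilton~\cite{Hamilton82}, which identifies $M$ as a spherical space form. For the noncompact branch I would derive the sharper quantitative bound $a+b\ge \frac{10/3-\a}{5-\a}(a+b+c)$ by tracking all eigenvalue-difference contributions in the same $\a$-nonnegativity inequality, and combine it with Liu~\cite{Liu13} plus a Ricci-flow blow-up argument to rule out the remaining noncompact non-flat alternative.

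Parts (2) and (3) are direct invocations of Hamilton~\cite{Hamilton86} and Liu~\cite{Liu13} under $\Ric\ge 0$; at $\a=10/3$ the bracket $9(\a-8/3)^2-4$ vanishes, so the eigenvalue profile $(0,0,K)$ becomes marginally $10/3$-nonnegative and the splitting models $\mathbb{S}^2\times\R$ and $N^2\times\R$ appear exactly in the target lists. For part (4), I would run the Ricci flow and use Theorem~\ref{thm RF preserves} to propagate $\a$-nonnegativity of $\mathring R$, and hence $S\ge 0$, along the flow. If $S\equiv 0$ throughout the short-time interval, then $\p_t S=\Delta S+2|\Ric|^2$ forces $\Ric\equiv 0$, so $g$ is flat. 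Otherwise the strong maximum principle gives $S>0$ on all of $M$ at some positive time, so $M$ admits a metric of positive scalar curvature, and the Gromov--Lawson/Schoen--Yau/Perelman classification identifies the orientable closed $M$ as a connected sum of spherical space forms and copies of $\mathbb{S}^2\times\mathbb{S}^1$.

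The main obstacle is the pointwise rigidity computation in (1), both the uniform treatment of the three $\lfloor\a\rfloor$-subranges and the extraction of the quantitative bound sharp enough to handle the noncompact non-flat $\R^3$ possibility that Liu's classification leaves open, since Bonnet--Myers does not apply directly when the positive Ricci bound is proportional to scalar curvature and the scalar curvature may decay at infinity. A subsidiary technical point in (4) is disposing of the $S\equiv 0$ alternative, which the scalar curvature evolution handles as above.
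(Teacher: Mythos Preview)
Your overall reduction is right and matches the paper: parts (2), (3), (4) go through exactly as you say (for (4) the paper simply observes $S\ge 0$ and invokes Perelman directly, without running the flow, but your route via the scalar-curvature evolution is fine too). The quantitative bound you extract for part (1),
\[
a+b \;\ge\; \frac{10/3-\a}{5-\a}\,(a+b+c),
\]
is exactly the content of the paper's Proposition~\ref{prop 3.3}, which the authors prove by testing $\mathring R$ on the explicit basis $\vp_1,\dots,\vp_5$ rather than by your eigenvalue case-split; either derivation works. Note that once you have this $\Ric\ge\eps S$ estimate, your separate pointwise rigidity lemma and the open/closed connectedness argument become redundant: $\Ric\ge\eps S$ together with $S\ge 0$ already forces, at any point where $a+b=0$, that $S=0$ and hence $\hat R=0$.

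The genuine gap is in the noncompact branch of (1). You correctly identify that Liu's theorem only gets you to ``$M$ is diffeomorphic to $\R^3$'' in the non-splitting case, and that Bonnet--Myers fails because the Ricci lower bound is proportional to $S$, which may decay. But your proposed fix---``a Ricci-flow blow-up argument''---is not a proof sketch; it is a placeholder for what is in fact a deep theorem. The statement you need is precisely Hamilton's pinching conjecture in dimension three: a complete three-manifold with $\Ric\ge\eps S$ for some $\eps>0$ is either flat or compact. This was only settled by the combined work of Chen--Zhu \cite{CZ00}, Lott \cite{Lott19}, Deruelle--Schulze--Simon \cite{DSS22}, and Lee--Topping \cite{LT22}, and the paper invokes it as a black box. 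You should do the same; there is no shortcut via Liu's classification.
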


Note that parts (2) and (3) have been proved by the second name author in \cite{Li22JGA}. Part (4) is a direct consequence of the celebrated work of Perelman \cite{Perelman1, Perelman2, Perelman3}. Here we prove part (1) using Proposition \ref{prop 3.3} and the recent resolution of a conjecture of Hamilton in dimension three by \cite{CZ00}, \cite{Lott19}, \cite{DSS22}, and \cite{LT22}. 

Before concluding this section, we would like to mention that the action of the Riemann curvature tensor on symmetric two-tensors indeed has a long history. It perhaps first appeared for K\"ahler manifolds in the study of the deformation of complex analytic structures by Calabi and Vesentini \cite{CV60}. They introduced the self-adjoint operator $\xi_{\a \b} \to R^{\rho}_{\ \a\b}{}^{\sigma} \xi_{\rho \sigma}$ from $S^2(T^{1,0}_p M)$ to itself and computed the eigenvalues of this operator on Hermitian symmetric spaces of classical type, with the exceptional ones handled shortly after by Borel \cite{Borel60}. In the Riemannian setting, the action arises naturally in the context of deformations of Einstein structure in Berger and Ebin \cite{BE69} (see also \cite{Koiso79a, Koiso79b} and \cite{Besse08}). 
In addition, it appears in the Bochner-Weitzenb\"ock formulas for symmetric two-tensors (see for example \cite{MRS20}), for differential forms in \cite{OT79}, and for Riemannian curvature tensors in \cite{Kashiwada93}.  
In another direction, curvature pinching estimates for $\overline{R}$ were studied by Bourguignon and Karcher \cite{BK78}, and they calculated the eigenvalues of $\overline{R}$ on the complex projective space with the Fubini-Study metric and the quaternionic projective space with its canonical metric. 
Nevertheless, the operators $\overline{R}$ and $\mathring{R}$ are significantly less investigated than $\hat{R}$ and it is our goal to gain a better understanding of them.

This paper is organized as follows. 
In Section 2, we fix some notations and conventions.
In Section 3, we provide a strategy to diagonalize the curvature operator of the second kind of an algebraic curvature operator with vanishing Weyl tensor.  
In Section 4, we prove Theorem \ref{thm eigenvalue 3D}, Proposition \ref{prop 3D curvature} and Theorem \ref{thm 3-manifold classification}.  
In Section 5, we prove Theorem \ref{thm RF preserves}.

\section{Notations and Conventions}

Throughout this article, $(V,g)$ is a real Euclidean vector space of dimension $n \geq 2$ and $\{e_i\}_{i=1}^n$ is an orthonormal basis of $V$ for the metric $g$. We always identify $V$ with its dual space $V^*$ via $g$. 

$S^2(V)$ and $\wedge^2V$ denote the space of symmetric two-tensors and two-forms on $V$, respectively. Note that $S^2(V)$ splits into $O(V)$-irreducible subspaces as
\begin{equation*}
        S^2(V)=S^2_0(V)\oplus \R g,
\end{equation*}
where $S^2_0(V)$ is the space of traceless symmetric two-tensors.

$S^2(\wedge^2 V)$, the space of symmetric two-tensors on $\wedge^2V$, has the orthogonal decomposition 
    \begin{equation*}
        S^2(\wedge^2 V) =S^2_B(\wedge^2 V) \oplus \wedge^4 V,
    \end{equation*}
where $S^2_B(\wedge^2 V)$ consists of all tensors $R\in S^2(\wedge^2V)$ that also satisfy the first Bianchi identity. The space $S^2_B(\wedge^2V)$ is called the space of algebraic curvature operators on $V$.

The tensor product is defined as $(e_i\otimes e_j)(e_k,e_l) =\delta_{ik}\delta_{jl}$, where $\delta_{pq}$ denotes the Kronecker delta defined as
$$\delta_{pq}=\begin{cases}
    1,  & \text{ if } p=q \\ 0, &\text{otherwise}.
\end{cases}$$
The symmetric product $\odot$ and wedge product $\wedge$ are defined as 
$$ u \odot v=u\otimes v +v \otimes u$$ and 
$$u \wedge v=u\otimes v - v \otimes u,$$ 
respectively. 

The inner product on $S^2(V)$ is given by $\langle A, B \rangle =\tr(A^T B)$ and the inner product on $\wedge^2 V$ is given by $ \langle A, B \rangle =\frac{1}{2}\tr(A^T B)$. If $\{e_i\}_{i=1}^n$ is an orthonormal basis of $V$, then $\{\frac{1}{\sqrt{2}}e_i \odot e_j\}_{1\leq i<j\leq n} \cup \{\frac{1}{2}e_i \odot e_i\}_{1\leq i\leq n}$ is an orthonormal basis of $S^2(V)$ and $\{e_i \wedge e_j\}_{1\leq i<j\leq n}$ is an orthonormal basis of $\wedge^2 V$. 

Given $A, B\in S^2(V)$, their Kulkarni-Nomizu product gives rise to $A \owedge B  \in  S^2_B(\wedge^2 V)$ via
\begin{equation*}
    (A \owedge B )_{ijkl} =A_{ik}B_{jl}+A_{jl}B_{ik} -A_{jk}B_{il}-A_{il}B_{jk}.
\end{equation*}
It is well known that $R\in  S^2_B(\wedge^2 V)$ can be written as $A \owedge \id$ for some $A\in S^2(V)$ if and only if the Weyl tensor of $R$ vanishes. 
In particular, any $R\in  S^2_B(\wedge^2 \R^3)$ can be written as $A\owedge \id$ for some $A\in S^2(V)$ .

\section{Diagonalization} 
The main result of this section is the following theorem, which expresses the eigenvalues of the curvature operator of the second kind of $A \owedge \id $ in terms of the eigenvalues of $A$.

\begin{theorem}\label{thm diagonilization}
Let $A\in S^2(V)$ and denote by $\mu_1, \cdots, \mu_k$ the distinct eigenvalues of $A$ with corresponding multiplicities $n_1, \cdots, n_k$.  
Then the eigenvalues of the curvature operator of the second kind of  $A \owedge \id$ are given by
\begin{enumerate}
    \item $\mu_i +\mu_j$ with multiplicity $n_in_j$, for $1\leq i < j \leq k$;
    \item  $2 \mu_i$  with multiplicity $n_i-1$,   for $1 \leq i \leq k$;
    \item  the $k-1$ nonzero solutions of the equation $\sum_{i=1}^k \frac{n_i \mu_i}{2\mu_i-\l}=\frac{n}{2}$ if $\sum_{i=1}^k \frac{n_i}{\mu_i} \neq 0$, and the $k-2$ nonzero solutions of $\sum_{i=1}^k \frac{n_i \mu_i}{2\mu_i-\l}=\frac{n}{2}$ together with $0$ if $\sum_{i=1}^k \frac{n_i}{\mu_i} =0$. 
\end{enumerate}
\end{theorem}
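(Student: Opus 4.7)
The plan is to reduce the diagonalization to a concrete linear-algebra problem in a basis adapted to $A$, split $S^2_0(V)$ into $\mathring{R}$-invariant subspaces, and read off the eigenvalues from each piece. The first step is to derive an explicit formula for $\mathring{R}$ on $S^2_0(V)$. Starting from $\overline{R}(\varphi)_{ij} = \sum_{k,l} R_{iklj}\varphi_{kl}$ and the Kulkarni--Nomizu formula for $A \owedge \id$, a direct computation gives
\[
\overline{R}(\varphi) = A\varphi + \varphi A - \langle A, \varphi\rangle\,\id - \tr(\varphi)\,A,
\]
so that, after projecting onto the traceless part,
\[
\mathring{R}(\varphi) = A\varphi + \varphi A - \tfrac{2}{n}\langle A, \varphi\rangle\,\id \qquad \text{for all } \varphi \in S^2_0(V).
\]
This identity will be the workhorse of the whole argument.

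Next I will fix an orthonormal basis $\{e_1,\dots,e_n\}$ of $V$ diagonalizing $A$, with each $e_p$ lying in the $\mu(p)$-eigenspace, and decompose $S^2_0(V)$ into the span of off-diagonal elements $e_p \odot e_q$ ($p \neq q$) and the span of traceless diagonal tensors $\sum_p c_p\, e_p \otimes e_p$. Both pieces are $\mathring{R}$-invariant by the formula above. A short computation shows each $e_p \odot e_q$ (with $p \neq q$) is an eigenvector with eigenvalue $\mu(p) + \mu(q)$; grouping these eigenvectors by which $A$-eigenspaces the indices belong to produces the $\mu_i + \mu_j$ eigenvalues of item (1) together with the off-diagonal contributions to the $2\mu_i$ multiplicities. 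On the traceless diagonal part, those tensors supported entirely within a single $\mu_i$-eigenspace of $A$ (spanned by $e_p \otimes e_p - e_q \otimes e_q$ with $p, q$ in that eigenspace) are immediately eigenvectors with eigenvalue $2\mu_i$, yielding the $n_i - 1$ further copies in item (2).

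The last and most delicate piece is the $(k-1)$-dimensional subspace spanned by traceless combinations of the $A$-eigenprojections $P_1,\dots,P_k$. Using $\mathring{R}(P_i) = 2\mu_i P_i - \tfrac{2\mu_i n_i}{n}\id$ and writing $\varphi = \sum_i c_i P_i$ subject to $\sum_i n_i c_i = 0$, the equation $\mathring{R}(\varphi) = \lambda \varphi$ becomes the scalar system $(2\mu_i - \lambda) c_i = \tfrac{2}{n} \sum_j c_j \mu_j n_j$ for every $i$. Eliminating the $c_i$'s gives exactly the secular equation $\sum_i \tfrac{n_i \mu_i}{2\mu_i - \lambda} = \tfrac{n}{2}$ appearing in item (3).

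The main subtlety will be analyzing the root structure of this secular equation, which is what produces the dichotomy in item (3). Using the identity $\tfrac{\mu_i}{2\mu_i - \lambda} = \tfrac{1}{2} + \tfrac{\lambda/2}{2\mu_i - \lambda}$, the secular equation is equivalent to $\lambda \sum_i \tfrac{n_i}{2\mu_i - \lambda} = 0$; thus its roots consist of $\lambda = 0$ together with the (at most $k-1$) roots of $\sum_i \tfrac{n_i}{2\mu_i - \lambda} = 0$. Whether $\lambda = 0$ is already a root of this latter equation, equivalently whether $\sum_i n_i/\mu_i = 0$ or not, determines whether $\lambda = 0$ must be listed separately and whether one obtains $k-1$ or $k-2$ nonzero eigenvalues. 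A final dimension count confirms that (1)--(3) together exhaust $S^2_0(V)$.
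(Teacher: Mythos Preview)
Your strategy matches the paper's: decompose $S^2_0(V)$ into the off-diagonal span of the $e_p\odot e_q$ and the traceless diagonal part, then on the latter reduce to the scalar system $(2\mu_i-\lambda)c_i=\tfrac{2}{n}\sum_j n_j\mu_jc_j$. Two of your implementation choices are tidier than the paper's. First, you work with the closed formula $\mathring{R}(\varphi)=A\varphi+\varphi A-\tfrac{2}{n}\langle A,\varphi\rangle\,\id$ on $S^2_0(V)$, bypassing the coordinate computations the paper performs in its Lemmas~3.1 and~3.2. Second, for item~(3) you rewrite the secular equation as $\lambda\,g(\lambda)=0$ with $g(\lambda)=\sum_i n_i/(2\mu_i-\lambda)$, whereas the paper analyzes $f(\lambda)=\sum_i n_i\mu_i/(2\mu_i-\lambda)-\tfrac{n}{2}$ directly via the intermediate value theorem and the sign of $f'(0)$. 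Your factorization is a genuine shortcut, especially once one observes that $g(\lambda)=0$ is precisely the tracelessness constraint $\sum_i n_ic_i=0$ rewritten via $c_i=C/(2\mu_i-\lambda)$.

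That said, your final paragraph does not quite close the argument. You only claim that $g$ has \emph{at most} $k-1$ roots; item~(3) requires \emph{exactly} $k-1$, which follows since $g'>0$ everywhere and $g$ runs from $-\infty$ to $+\infty$ on each interval $(2\mu_i,2\mu_{i+1})$. More importantly, you never say \emph{why} the root $\lambda=0$ of the secular equation is spurious when $\sum_i n_i/\mu_i\neq 0$: the reason is that the candidate eigenvector $c_i=1/(2\mu_i)$ then fails the tracelessness constraint $\sum_i n_ic_i=0$, so it does not lie in $S^2_0(V)$. The paper makes this check explicit. Finally, your factorization tacitly assumes all $\mu_i\neq 0$ so that $g(0)$ is defined; when some $\mu_{i_0}=0$ the identity instead reads $\lambda\sum_{i\neq i_0}n_i/(2\mu_i-\lambda)=n_{i_0}$, so $\lambda=0$ is never a root and a short separate count (as the paper gives) is needed. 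These are easy patches, but as written the dichotomy in item~(3) is asserted rather than established.
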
 

To prove Theorem \ref{thm diagonilization}, we first observe that if $\l_i$ and $\l_j$ are eigenvalues of $A$, then $\l_i+\l_j$ is an eigenvalue of the curvature operator of the second kind of $A \owedge \id$. 
\begin{lemma}\label{lemma 3.1}
Let $A\in S^2(V)$ and $\{e_i\}_{i=1}^n$ be an orthonormal basis of $V$ such that $A(e_i)=\l_i e_i$ for $1\leq i \leq n$. Then we have
\begin{equation*}
    (A \owedge \id ) (e_p \odot e_q) =(\l_p+\l_q) (e_p \odot e_q)
\end{equation*}
for $1\leq p \neq  q\leq n$.
\end{lemma}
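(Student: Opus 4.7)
The approach is a direct component-wise computation using the Kulkarni-Nomizu expansion of $A \owedge \id$. To begin, observe that when $p \neq q$ the symmetric tensor $e_p \odot e_q$ has vanishing trace and therefore lies in $S^2_0(V)$; consequently the projection $\pi : S^2(V) \to S^2_0(V)$ fixes it, and $\mathring R(e_p \odot e_q) = \overline R(e_p \odot e_q)$. Hence the claim reduces to verifying the single identity $\overline R(e_p \odot e_q) = (\l_p + \l_q)(e_p \odot e_q)$ for $R = A \owedge \id$.

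The first step is to write the components of $R$ from the Kulkarni-Nomizu formula in Section 2,
$$R_{ijkl} = A_{ik}\delta_{jl} + A_{jl}\delta_{ik} - A_{jk}\delta_{il} - A_{il}\delta_{jk},$$
and to relabel indices to obtain $R_{iklj}$, the pattern that enters the definition $\overline R(\varphi)_{ij} = \sum_{k,l} R_{iklj}\varphi_{kl}$. The second step is to substitute $\varphi_{kl} = (e_p \odot e_q)_{kl} = \delta_{pk}\delta_{ql} + \delta_{qk}\delta_{pl}$, which collapses the double sum to $R_{ipqj} + R_{iqpj}$. In the third step, I would use that $\{e_i\}$ diagonalizes $A$, so $A_{ab} = \l_a \delta_{ab}$, together with the hypothesis $p \neq q$, to kill every term containing $A_{pq}$, $A_{qp}$, or $\delta_{pq}$. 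What survives then combines to $(\l_p + \l_q)\bigl(\delta_{ip}\delta_{jq} + \delta_{iq}\delta_{jp}\bigr) = (\l_p + \l_q)(e_p \odot e_q)_{ij}$, which is exactly the asserted identity.

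The only genuine difficulty is careful bookkeeping: one has to track the four signed terms in the Kulkarni-Nomizu expansion and the two summands comprising $e_p \odot e_q$, and confirm that exactly the correct cross-terms survive under the assumption $p \neq q$. There is no conceptual obstacle; the lemma is essentially a direct verification. Conceptually, it extracts the ``pure mode'' eigenvectors $e_p \odot e_q$ (with $p \neq q$) of $\mathring R$, which account for the $\binom{n}{2}$ eigenvalues listed in part (1) of Theorem \ref{thm diagonilization}, and will serve as the first ingredient in the full diagonalization (where the more subtle ``diagonal modes'' involving $e_p \odot e_p$ produce items (2) and (3) of that theorem).
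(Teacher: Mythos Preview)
Your proposal is correct and takes essentially the same approach as the paper: a direct component-wise computation substituting the Kulkarni-Nomizu expansion of $A \owedge \id$ and the components $(e_p \odot e_q)_{jk} = \delta_{jp}\delta_{kq} + \delta_{jq}\delta_{kp}$, then using $A_{ab} = \l_a \delta_{ab}$ and $p \neq q$ to simplify. The paper carries out this same calculation with the index convention $R_{ijkl}\varphi_{jk}$ (result in $il$) rather than your $R_{iklj}\varphi_{kl}$ (result in $ij$), which are identical up to relabeling; your additional remark that $e_p \odot e_q$ is traceless (so $\pi$ is trivial on it) is a helpful contextual observation but not needed for the lemma as stated.
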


\begin{proof}
Note that
\begin{equation*}
    (e_p\odot e_q)(e_j,e_k)=\delta_{jp}\delta_{kq} + \delta_{kp}\delta_{jq}.
\end{equation*}
The conclusion follows from a straightforward calculation as follows:
\begin{eqnarray*}
 && \left((A \owedge \id ) (e_p \odot e_q)\right)_{il} \\
 &=&  (A \owedge \id )_{ijkl} (e_p \odot e_q)_{jk}  \\
&=&   (A_{ik}\delta_{jl}+A_{jl}\delta_{ik}-A_{il}\delta_{jk}-A_{jk}\delta_{il}) (\delta_{jp}\delta_{kq} +\delta_{jq}\delta_{kp}) \\
&=&  (\l_i \delta_{ik}\delta_{jl}+\l_j \delta_{jl}\delta_{ik}-\l_i \delta_{il}\delta_{jk}-\l_j\delta_{jk}\delta_{il})(\delta_{jp}\delta_{kq}+\delta_{jq}\delta_{kp})\\ 
&=& (\l_i \delta_{iq}\delta_{pl} +\l_p\delta_{pl}\delta_{iq} -\l_i\delta_{il}\delta_{pq}-\l_p \delta_{pq}\delta_{il} )\\ &&+  (\l_i \delta_{ip}\delta_{ql} +\l_q\delta_{ql}\delta_{ip} -\l_i\delta_{il}\delta_{pq}-\l_q\delta_{pq}\delta_{il} )\\
&=& (\l_p+\l_q) (\delta_{iq}\delta_{pl}+\delta_{ip}\delta_{ql}) \\
 &=& (\l_p+\l_q) (e_p \odot e_q)_{il},
\end{eqnarray*}
where $j$ and $k$ are summed from $1$ to $n$. 
\end{proof}

Since the orthogonal complement of $\spann\{e_p \odot e_q : 1 \leq p \neq q \leq n\}$ in $S^2_0(V)$ is $$E:=\left\{\sum_{p=1}^n c_p e_p \odot e_p: \sum_{p=1}^n c_p=0 \right\},$$ 
the eigentensors associated with the remaining eigenvalues are in $E$.
This reduces the problem of finding the remaining eigenvalues to solving some algebraic equations. 

\begin{lemma}\label{lemma 3.2}
Let $A\in S^2(V)$ and $\{e_i\}_{i=1}^n$ be an orthonormal basis of $V$ such that $A(e_i)=\l_i e_i$ for $1\leq i \leq n$. Suppose that $c_1, \cdots, c_n$, not all zeros, and $\l$ are real numbers satisfying
\begin{equation}\label{eq 3.1}
    \sum_{p=1}^n c_p=0, 
\end{equation}
and 
\begin{equation}\label{eq 3.2}
    2\l_pc_p -\frac{2}{n}\sum_{q=1}^n c_q\l_q =\l c_p \text{ for } 1\leq p \leq n.
\end{equation}
Then $\l$ is an eigenvalue of the curvature operator of the second kind of $A \owedge \id$ with eigentensor $\sum_{p=1}^n c_p e_p \odot e_p$.
\end{lemma}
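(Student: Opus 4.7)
The plan is to verify directly, by computing in the eigenbasis $\{e_p\}_{p=1}^n$, that $\varphi := \sum_{p=1}^n c_p\, e_p \odot e_p$ is an eigentensor of $\mathring{R} = \pi \circ \overline{R}$ with eigenvalue $\lambda$. The verification breaks into three short steps.

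\emph{Step 1: $\varphi \in S^2_0(V)$.} Since $(e_p \odot e_p)_{ij} = 2\delta_{ip}\delta_{jp}$, the tensor $\varphi$ is diagonal in the chosen basis with $\varphi_{ii} = 2c_i$, and hypothesis \eqref{eq 3.1} forces $\tr\varphi = 2\sum_p c_p = 0$.

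\emph{Step 2: compute $\overline{R}(\varphi)$.} Expanding the Kulkarni-Nomizu product $R = A \owedge \id$ and contracting against $\varphi$ in $\overline{R}(\varphi)_{ij} = \sum_{k,l} R_{iklj}\varphi_{kl}$ yields the intrinsic identity
$$\overline{R}(\varphi) \;=\; A\varphi + \varphi A \;-\; \langle A,\varphi\rangle\,\id \;-\; (\tr\varphi)\,A.$$
Since $A$ and $\varphi$ are simultaneously diagonal in $\{e_p\}$, they commute, $A\varphi$ is diagonal with entries $2\lambda_p c_p$, and the last term drops out by Step 1. Hence $\overline{R}(\varphi)$ is diagonal with $\overline{R}(\varphi)_{ii} = 4\lambda_i c_i - 2\sum_p \lambda_p c_p$.

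\emph{Step 3: project and compare.} Applying the projection $\pi(T) = T - \tfrac{1}{n}(\tr T)\,\id$ and simplifying gives
$$\mathring{R}(\varphi)_{ii} \;=\; 2\Big(2\lambda_i c_i - \tfrac{2}{n}\sum_p \lambda_p c_p\Big),$$
while $\lambda\varphi_{ii} = 2\lambda c_i$. Matching these diagonal tensors component by component reduces the eigenvalue equation $\mathring{R}(\varphi) = \lambda\varphi$ precisely to \eqref{eq 3.2}, completing the verification.

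The argument poses no genuine obstacle; it is essentially an index calculation that exploits the simultaneous diagonalization of $A$ and $\varphi$. The only point that requires attention is getting the combinatorial factor in the projection step correct—tracking how the $\tfrac{1}{n}$ in $\pi$ produces the $\tfrac{2}{n}$ appearing in \eqref{eq 3.2}, and using $\tr\varphi = 0$ already at the $\overline{R}$ stage so that fewer terms survive the projection.
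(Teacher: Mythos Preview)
Your proof is correct and follows essentially the same approach as the paper: both compute the action of $A\owedge\id$ on $\sum_p c_p\,e_p\odot e_p$ directly in the eigenbasis, then project and match against $\lambda\varphi$ to recover \eqref{eq 3.2}. The only cosmetic difference is that you invoke the intrinsic identity $\overline{R}(\varphi)=A\varphi+\varphi A-\langle A,\varphi\rangle\,\id-(\tr\varphi)A$ before specializing, whereas the paper expands the Kulkarni--Nomizu product in coordinates from the outset.
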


\begin{proof}
For fixed $1\leq p\leq n$, we calculate that
\begin{eqnarray*}
 && \left((A \owedge \id ) (e_p \odot e_p)\right)_{il} \\
 &=&   (A \owedge \id )_{ijkl} (e_p \odot e_p)_{jk}  \\
&=&2  (A_{ik}\delta_{jl}+A_{jl}\delta_{ik}-A_{il}\delta_{jk}-A_{jk}\delta_{il}) \delta_{jp}\delta_{kp}  \\
&=& 2 (\l_i \delta_{ik}\delta_{jl}+\l_j \delta_{jl}\delta_{ik}-\l_i \delta_{il}\delta_{jk}-\l_j\delta_{jk}\delta_{il})\delta_{jp}\delta_{kp} \\    
&=& 2(\l_i \delta_{ip}\delta_{pl} +\l_p \delta_{pl}\delta_{ip}-\l_i \delta_{il} -\l_p \delta_{il}) \\
&=& 2\l_p (e_p \odot e_p)_{il} -2\l_p\delta_{il}-2\l_i \delta_{il},
\end{eqnarray*}
where $j$ and $k$ are summed from $1$ to $n$.
Namely, we have proven that
\begin{equation*}
    (A \owedge \id ) (e_p \odot e_p) =2\l_p e_p \odot e_p -2 \l_p g -\sum_{i=1}^n \l_i e_i \odot e_i,
\end{equation*}
for $1\leq p \leq n$. 
Next, we compute that
\begin{eqnarray*}
 && (A \owedge \id)\left(\sum_{p=1}^n c_p e_p\odot e_p\right) \\
 &=& 2 \sum_{p=1}^n \l_p c_p e_p\odot e_p -2\left(\sum_{p=1}^n  c_p \l_p  \right) g -\sum_{p=1}^n \left( c_p \sum_{i=1}^n \l_i e_i \odot e_i\right) \\
 &=& 2\sum_{p=1}^n \l_pc_p e_p\odot e_p -2\left(\sum_{p=1}^n c_p\l_p\right) g, 
\end{eqnarray*}
where we have used \eqref{eq 3.1} in the last step. We further calculate, with $\pi: S^2(V) \to S^2_0(V)$ being the projection map, that
\begin{eqnarray*}
&& \left( \pi \circ (A \owedge \id) \right)\left(\sum_{p=1}^n c_p e_p\odot e_p\right) \\ 
&=& \sum_{p=1}^n \left(2\l_pc_p  -\frac{2}{n} \sum_{q=1}^n c_q\l_q \right) e_p\odot e_p, \\
 &=& \l \sum_{p=1}^n c_p e_p\odot e_p,
\end{eqnarray*}
where we have used \eqref{eq 3.2} in the last step. This finishes the proof. 
\end{proof}

We are ready to prove Theorem \ref{thm diagonilization}. 
\begin{proof}[Proof of Theorem \ref{thm diagonilization}]
Let $\mu_1< \mu_2 < \cdots < \mu_k$ be the distinct eigenvalues of $A$ with corresponding multiplicities $n_1, \cdots, n_k$. 
Lemma \ref{lemma 3.1} implies that $\mu_i+\mu_j$ is an eigenvalue of the curvature operator of the second kind of $A\owedge \id$ with multiplicity $n_in_j$. 

By Lemma \ref{lemma 3.2}, we need to solve \eqref{eq 3.1} and \eqref{eq 3.2} to find the remaining eigenvalues. 
We first observe that for each $n_i >1$, $\l=2\mu_i$ is an eigenvalue of the curvature operator of the second kind of $A\owedge{\id}$ with multiplicity $n_i-1$ and the associated eigenspace is given by
\begin{eqnarray*}
    \spn \left\{\sum_{p=n_{i-1}+1}^{n_{i-1}+n_i} a_p e_p\odot e_p : \sum_{p=n_{i-1}+1}^{n_{i-1}+n_i} a_p =0 \right\}.
\end{eqnarray*}
Here and in the rest of the proof, we use the convention $n_0=0$ for simplicity of notations.
In this way, we can find $(n_1-1)+\cdots + (n_k-1)=n-k$ eigenvalues. 

To find the remaining $k-1$ eigenvalues, we observe that if $\l$ is a solution to the equation 
\begin{equation}\label{eq 3.3}
    \sum_{i=1}^k \frac{n_i \mu_i}{2\mu_i-\l}=\frac{n}{2},
\end{equation}
then $\l$ and $c_1, \cdots, c_n$ defined by 
$$c_{n_{i-1}+1}=\cdots =c_{n_{i-1}+n_i}=\frac{1}{2\mu_i-\l} \text{ for } 1\leq i \leq k$$ 
satisfy \eqref{eq 3.2}.
Note that \eqref{eq 3.1} is also satisfied if either $\l \neq 0$ or $\l=0$ and $\sum_{i=1}^k \frac{n_i}{\mu_i}=0$. By Lemma \ref{lemma 3.2}, nonzero solutions to \eqref{eq 3.3} are eigenvalues of the curvature operator of the second kind of $A \owedge \id$. Also, $\l=0$ is an eigenvalue if $\sum_{i=1}^k \frac{n_i}{\mu_i}=0$. 

If $\mu_p=0$ for some $1\leq p \leq k$, we claim that \eqref{eq 3.3} has exactly one  solution in $(\mu_i, \mu_{i+1})$ for each $1\leq i \leq k-1$. To see this, we consider the function $f$ defined by
\begin{equation*}
    f(\l)=  \sum_{i=1}^k \frac{n_i \mu_i}{2\mu_i-\l}-\frac{n}{2}. 
\end{equation*}
Clearly, $\lim_{\l \to 0}f(\l) =-\frac{n_p}{2} <0$. 
Moreover, if $\mu_i >0$, then
\begin{equation*}
    \lim_{\l \to 2\mu_i^-} f(\l)=\infty \text{ and } \lim_{\l \to 2\mu_i^+} f(\l) = -\infty,
\end{equation*}
and if $\mu_i <0$, then
\begin{equation*}
    \lim_{\l \to 2\mu_i^-} f(\l)=-\infty \text{ and } \lim_{\l \to 2\mu_i^+} f(\l) = \infty.
\end{equation*}
By the intermediate value theorem, the continuity of $f$ on $\R \setminus \{2\mu_1, \cdots, 2\mu_k\}$, and the asymptotics of $f$ near the $2\mu_i$'s, one sees that for each $1\leq i\leq k-1$, $f(\l)=0$ must have at least one solution in the interval $(\mu_i, \mu_{i+1})$ . 
Since each solution of $f(\l)=0$ is also a solution of the degree $k-1$ polynomial 
\begin{equation*}
    f(\l) \prod_{1\leq i \leq k, i\neq p} (2\mu_i -\l),
\end{equation*}
one concludes that $f(\l)=0$ has exactly one solution on $(\mu_i, \mu_{i+1})$ for each $1\leq i\leq k-1$. Therefore, \eqref{eq 3.3} has $k-1$ distinct nonzero solutions. 

Next, we consider the case $\mu_i\neq 0$ for all $1\leq i\leq k$. 
In this case, $f$ has the same asymptotics at the $2\mu_i$'s as before, but $f(0)=0$. On an interval of the form $(\mu_i, \mu_{i+1})$ not containing $0$, the intermediate value theorem implies that $f(\l)=0$ has at least one solution in it. If $0 \in (\mu_i, \mu_{i+1})$, then $f(\l)=0$ has at least one solution in $(\mu_i, 0)$ if $f'(0) > 0$ and at least one solution in $(0, \mu_{i+1})$ if $f'(0)< 0$. 
When $f'(0)=\frac{1}{4}\sum_{i=1}^k \frac{n_i}{\mu_i}=0$, zero is a solution of $f(\l)=0$ with multiplicity two. In this case, $0$ is an eigenvalue of the curvature operator of the second of $A\owedge \id$, as $\l=0$ and $c_1, \cdots, c_n$ defined by 
$$c_{n_{i-1}+1}=\cdots =c_{n_{i-1}+n_i}=\frac{1}{2\mu_i} \text{ for } 1\leq i \leq k,$$ 
satisfies both \eqref{eq 3.1} and \eqref{eq 3.2}. 
Noticing that each solution of $f(\l)=0$ is also a solution of the degree $k$ polynomial 
\begin{equation*}
    f(\l) \prod_{i=1}^k (2\mu_i -\l),
\end{equation*}
we conclude that if $\sum_{i=1}^k \frac{n_i}{\mu_i}\neq 0$, then $f(\l)=0$ has exactly one solution in $(\mu_i,\mu_{i+1})$ for each $1\leq i \leq k-1$, and if $\sum_{i=1}^k \frac{n_i}{\mu_i}=0$, then $f(\l)=0$ has exactly one solution in each interval of the form $(\mu_i,\mu_{i+1})$ not containing zero, and $0$ is a solution of $f(\l)=0$ of multiplicity two. 

Overall, the remaining $k-1$ eigenvalues of the curvature operator of the second kind of $A\owedge \id$ are given by the $k-1$ nonzero solutions of \eqref{eq 3.3} if $\sum_{i=1}^k \frac{n_i}{\mu_i}\neq 0$, and by the $k-2$ nonzero solutions of \eqref{eq 3.3} together with $0$ if $\sum_{i=1}^k \frac{n_i}{\mu_i}= 0$. In both cases, there is exactly one eigenvalue in $(\mu_i, \mu_{i+1})$ for each $1\leq i \leq k$, and all eigenvalues lie in the interval $(\mu_1, \mu_k)$.

The proof is complete.
\end{proof}

It is clear from the proof of Theorem \ref{thm diagonilization} that
\begin{corollary}
If the eigenvalues of $A\in S^2(V)$ lie in the interval $[a,b]$, then the eigenvalues of the curvature operator of the second kind of $A\owedge \id $ lie in $[2a,2b]$. 
\end{corollary}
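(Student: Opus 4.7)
The plan is to apply Theorem \ref{thm diagonilization} and check, family by family, that every eigenvalue of the curvature operator of the second kind of $A\owedge\id$ lies in $[2a,2b]$. Writing $\mu_1<\mu_2<\cdots<\mu_k$ for the distinct eigenvalues of $A$, the hypothesis gives $[\mu_1,\mu_k]\subseteq [a,b]$, so it suffices to show every eigenvalue lies in $[2\mu_1,2\mu_k]$.

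Families (1) and (2) from Theorem \ref{thm diagonilization} are immediate: $\mu_i+\mu_j\in[2\mu_1,2\mu_k]$ for any $i\neq j$, and $2\mu_i\in[2\mu_1,2\mu_k]$ for every $i$. The only substantive step is family (3), i.e. each nonzero root of
\[
f(\lambda):=\sum_{i=1}^k\frac{n_i\mu_i}{2\mu_i-\lambda}-\frac{n}{2}=0,
\]
together with the possible extra eigenvalue $\lambda=0$ arising when $\sum_{i=1}^k n_i/\mu_i=0$. This containment is in fact already recorded in the last paragraph of the proof of Theorem \ref{thm diagonilization}, which places exactly one such root in each gap between consecutive singularities of $f$; the extra $\lambda=0$ case is harmless because the condition $\sum_i n_i/\mu_i=0$ forces both positive and negative $\mu_i$ to be present, so automatically $0\in(2\mu_1,2\mu_k)$.

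The main (indeed only) obstacle is giving a clean direct verification that no nonzero root of $f$ escapes $[2\mu_1,2\mu_k]$. I would use the algebraic identity
\[
f(\lambda)=\frac{\lambda}{2}\sum_{i=1}^k\frac{n_i}{2\mu_i-\lambda},
\]
obtained by writing $n_i\mu_i=\tfrac{n_i}{2}(2\mu_i-\lambda)+\tfrac{n_i\lambda}{2}$ in each numerator and using $\sum_i n_i=n$. For $\lambda>2\mu_k$ or $\lambda<2\mu_1$, all denominators $2\mu_i-\lambda$ share a common sign, so the sum is nonzero, forcing $f(\lambda)=0$ to imply $\lambda=0$, which is not in that range. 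Hence every nonzero root of $f$ lies in $(2\mu_1,2\mu_k)\subseteq[2a,2b]$, and combined with the trivial bounds on families (1) and (2) this gives the corollary.
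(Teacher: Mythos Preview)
Your proof is correct and, in its second paragraph, reproduces exactly what the paper does: the paper's entire proof is the remark that the containment is already established in the proof of Theorem \ref{thm diagonilization}, since the family (3) eigenvalues were located one per gap between consecutive singularities $2\mu_i$ (so in particular inside $[2\mu_1,2\mu_k]$), and families (1) and (2) are trivial.

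Your third paragraph adds a genuinely different, self-contained argument via the identity $f(\lambda)=\tfrac{\lambda}{2}\sum_i n_i/(2\mu_i-\lambda)$, which avoids the intermediate-value and degree-counting analysis entirely and directly rules out nonzero roots outside $(2\mu_1,2\mu_k)$ by a sign check. This is cleaner for the corollary alone, though the paper's route gives more (the precise interlacing). One small wording issue: the clause ``which is not in that range'' is not literally true in general (if all $\mu_i$ have the same sign then $0$ does lie outside $[2\mu_1,2\mu_k]$), but this is harmless since you have already separated out the $\lambda=0$ case and your conclusion is only about nonzero roots; simply deleting that clause leaves the argument intact.
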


\section{Proof of Theorem \ref{thm eigenvalue 3D}}

We apply Theorem \ref{thm diagonilization} to dimension three and prove Theorem \ref{thm eigenvalue 3D}. 
\begin{proof}[Proof of Theorem \ref{thm eigenvalue 3D}]

Note that $R\in S^2_B(\wedge^2 \R^3)$ can be written as
\begin{equation*}
    R=A\owedge g,
\end{equation*}
where $A:=\Ric-\frac{S}{4}g$ is the Schouten tensor. 
If $a\leq b \leq c$ are the eigenvalues of $\hat{R}$, then the eigenvalues of $\Ric$ are $a+b\leq a+c \leq b+c$ and the scalar curvature is $S=2(a+b+c)$. Thus, the eigenvalues of $A$ are given by 
$$\frac{1}{2}(a+b-c) \leq \frac{1}{2}(a+c-b)\leq \frac{1}{2}(b+c-a).$$

Let's first deal with the case $a<b<c$. By Theorem \ref{thm diagonilization}, the eigenvalues of $\mathring{R}$ are $a< b < c$, and the two solutions of the equation
$$\frac{a+b-c}{a+b-c -\l} + \frac{a+c-b}{a+c-b -\l } + \frac{b+c-a}{b+c-a-\l}=3,$$
which are given by
$$\l_{\pm}=\frac{a+b+c}{3}\pm \frac{\sqrt{2}}{3}\sqrt{3(a^2+b^2+c^2)-(a+b+c)^2}.$$ 
Next, one verifies that the expressions of eigenvalues of $\mathring{R}$ in Theorem \ref{thm eigenvalue 3D} remain valid for the cases $a=b<c$, $a<b=c$, and $a=b=c$.  
Finally, the inequalities $\l_{-}\leq a$ and $\l_{+}\geq c$ follow from simple algebraic manipulations.

\end{proof}

Next, we prove Proposition \ref{prop 3D curvature}. 
\begin{proof}
Parts (1), (3), and (5) have been proved in \cite{Li21, Li22JGA}, so we only prove parts (2) and (4) here. 

Let $R\in S^2_B(\wedge^2 \R^3)$ and let $a\leq b \leq c$ be the eigenvalues of $\hat{R}$. 
Then $R$ has nonnegative sectional curvature (or $\hat{R}\geq 0$) if and only if $a\geq 0$, and $R$ has nonnegative Ricci curvature (or $\hat{R}$ is two-nonnegative) if and only if $a+b\geq 0$. By Theorem \ref{thm eigenvalue 3D}, the eigenvalues of $\mathring{R}$ are given by
$$\l_- \leq a \leq b \leq c \leq \l_+,$$ 
where $\l_{\pm}$ are defined in \eqref{def lambda pm}. 

Note that $\mathring{R}$ is $3\frac{1}{3}$-nonnegative if and only if $\l_{-} +a +b +\frac{c}{3}\geq 0$, which is, after some algebraic manipulations, equivalent to $2a+2b+c \geq 0$ and
$$12a^2+12b^2+36ab+20ac+20bc \geq 0.$$ 
Both inequalities hold if $a \geq 0$. So, we have proved part (2).

To prove part (4), we observe that the inequality $ \l_{-} +a +b +c\geq 0$ is equivalent to $a+b+c\geq 0$ and 
\begin{equation*}
    (a+b)^2+(c^2 +ab)+3(a+b)c \geq 0,
\end{equation*}
which holds provided that $a+b\geq 0$.
Results with ``nonnegative" replaced by ``positive" follows similarly. 

To prove the statement for nonpositivity in (2), we note that $R$ has nonpositive sectional curvature $\iff$ $-R$ has nonnegative sectional curvature $\implies$ $-\mathring{R}$ is $3\frac{1}{3}$-nonnegative $\iff$ $\mathring{R}$ is $3\frac{1}{3}$ is nonpositive. Other results concerning negativity or nonpositivity can be deduced similarly.

\end{proof}


Below we construct some examples to show the sharpness of Proposition \ref{prop 3D curvature}. 
In the following, $a\leq b \leq c$ are the eigenvalues of $\hat{R}$ and $\epsilon$ is a small positive number. 
\begin{example}
Let $a=-\epsilon$ and $b=c=1$. Then $\l_{-}=-\epsilon$ and $\l_+=\frac{4+\epsilon}{3}$. This provides an example of $R\in S^2_B(\wedge^2 \R^3)$ such that $\mathring{R}$ is $(2+2\epsilon)$-nonnegative but $R$ does not have nonnegative sectional curvature. 
\end{example}


\begin{example}
Let $a=b=0$ and $c=1 $. Then $\l_{-}=-\frac{1}{3}$ and $\l_+=1$. This provides an example of $R\in S^2_B(\wedge^2 \R^3)$ with nonnegative sectional curvature but $\mathring{R}$ is not $\a$-nonnegative for any $\a < 3\frac{1}{3}$. This example is the curvature tensor of $\mathbb{S}^2 \times \R$ with the product metric. 
\end{example}

\begin{example}
Let $a=-\epsilon$, $b=0$, and $c=1+\epsilon $. Then $\l_{\pm}=\frac{1}{3}\pm \frac{2}{3}\sqrt{1+3\epsilon^2+3\epsilon}$. This provides an example of $R\in S^2_B(\wedge^2 \R^3)$ such that $\mathring{R}$ is $\left(3\frac{1}{3}+\delta(\epsilon)\right)$-nonnegative but $R$ does not have nonnegative Ricci curvature, where $\delta(\epsilon)=\frac{2}{3}\frac{\sqrt{1+3\epsilon^2+3\epsilon}-(1-\epsilon)}{1+\epsilon}$.
\end{example}



\begin{example}
Let $a=-1$ and $b=c=1$. Then $\l_{-}=-1$ and $\l_+=\frac{5}{3}$. This provides an example of $R\in S^2_B(\wedge^2 \R^3)$ such that $\Ric \geq 0$ but $\mathring{R}$ is not $\a$-nonnegative for any $\a < 4$. 
\end{example}

In order to prove part (1) of Theorem \ref{thm 3-manifold classification}, we need a proposition.
\begin{proposition}\label{prop 3.3}
Let $R\in S^2_B(\wedge^2 \R^3)$ be an algebraic curvature operator. If $\mathring{R}$ is $(3+\delta)$-nonnegative for some $\delta \in [0,\frac{1}{3}]$, then 
\begin{equation*}
    \Ric \geq \frac{1-3\delta}{3(2-\delta)} S.
\end{equation*}
\end{proposition}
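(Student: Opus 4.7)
The plan is to convert the spectral hypothesis on $\mathring R$, via Theorem~\ref{thm eigenvalue 3D}, into an explicit algebraic inequality among the eigenvalues $a\le b\le c$ of $\hat R$, and then to read off a bound on the smallest Ricci eigenvalue $a+b$ in terms of $a+b+c$ (equivalently, the scalar curvature $S=2(a+b+c)$).

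First, Theorem~\ref{thm eigenvalue 3D} lists the eigenvalues of $\mathring R$ in increasing order as $\lambda_-\le a\le b\le c\le \lambda_+$, and since $\delta\in[0,\tfrac13]\subset[0,1)$, the $(3+\delta)$-nonnegativity of $\mathring R$ is exactly
\[
\lambda_- + a + b + \delta c \;\ge\; 0.
\]
Substituting \eqref{def lambda pm} and multiplying by $3$ converts this to
\[
4(a+b) + (1+3\delta)\,c \;\ge\; \sqrt{\,2\bigl[3(a^2+b^2+c^2)-(a+b+c)^2\bigr]\,}.
\]

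Second, a direct expansion gives the sum-of-squares identity
\[
2\bigl[3(a^2+b^2+c^2)-(a+b+c)^2\bigr] \;=\; (a+b-2c)^2 + 3(a-b)^2,
\]
so discarding the nonnegative term $3(a-b)^2$ under the square root yields
\[
4(a+b) + (1+3\delta)\,c \;\ge\; |a+b-2c|.
\]
The ordering $b\le c$ forces $a+b\le 2c$, so the right-hand side equals $2c-(a+b)$, and rearrangement gives the key linear estimate
\[
5(a+b) \;\ge\; (1-3\delta)\,c.
\]

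Third, this inequality is equivalent by direct algebra to
\[
(a+b) \;\ge\; \frac{1-3\delta}{3(2-\delta)}\,(a+b+c),
\]
which is the claimed lower bound on the smallest Ricci eigenvalue in terms of $S$. Since $a+b$ is the smallest of the three Ricci eigenvalues $a+b\le a+c\le b+c$, the estimate promotes to the operator inequality $\Ric \ge \tfrac{1-3\delta}{3(2-\delta)}\,S$. In the boundary case $\delta=\tfrac13$ the estimate reduces to $a+b\ge 0$, consistent with part~(3) of Proposition~\ref{prop 3D curvature}.

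The only real subtlety is verifying $4(a+b)+(1+3\delta)c\ge 0$ before invoking $|a+b-2c|$, but this is automatic because the right-hand side of the initial inequality is already nonnegative. The pivotal observation is the sum-of-squares identity, which isolates $(a+b-2c)^2$ inside the square root and lets a single ``drop-a-square'' step deliver the desired linear bound in one stroke.
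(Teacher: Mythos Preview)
Your argument is correct and arrives at the same sharp linear inequality $5(a+b)\ge(1-3\delta)c$ as the paper, but by a genuinely different route. The paper does \emph{not} pass through Theorem~\ref{thm eigenvalue 3D}. Instead it fixes an arbitrary orthonormal frame $\{e_1,e_2,e_3\}$, writes down an explicit orthonormal system $\varphi_1,\ldots,\varphi_5\in S^2_0(\R^3)$, and tests $(3+\delta)$-nonnegativity against the quadruple $\varphi_2,\varphi_3,\varphi_5,\varphi_1$ (which need not be eigenvectors of $\mathring R$). Since each $\mathring R(\varphi_i,\varphi_i)$ is a simple combination of sectional curvatures, one line of arithmetic gives $(2-\delta)R_{33}\ge(\tfrac13-\delta)\tfrac{S}{2}$ for every frame, hence in particular for a frame in which $R_{33}$ is the least Ricci eigenvalue. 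Your approach instead feeds the explicit formula for $\lambda_-$ into the hypothesis and uses the sum-of-squares identity $2\bigl[3(a^2+b^2+c^2)-(a+b+c)^2\bigr]=(a+b-2c)^2+3(a-b)^2$, so that dropping the single square $3(a-b)^2$ delivers the bound. The paper's test-tensor method is more self-contained (it does not require knowing $\lambda_\pm$) and is the template that generalizes beyond dimension three; yours makes the three-dimensional algebra completely transparent and exhibits the equality case $a=b$ at a glance.

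One bookkeeping point: in the paper's conventions $S=2(a+b+c)$, so your final line $(a+b)\ge\frac{1-3\delta}{3(2-\delta)}(a+b+c)$ literally says $\Ric\ge\frac{1-3\delta}{6(2-\delta)}S$, not $\frac{1-3\delta}{3(2-\delta)}S$. The paper's own last step carries the same factor-of-two slip (its displayed computation ends at $(2-\delta)R_{33}\ge(\tfrac13-\delta)\tfrac{S}{2}$, which gives the constant $\tfrac{1-3\delta}{6(2-\delta)}$). Since only positivity of the coefficient is used downstream in Theorem~\ref{thm 3-manifold classification}, this is harmless for the application, but you should be explicit about it rather than tacitly identifying $a+b+c$ with $S$.
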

\begin{proof}
As in \cite[Section 3]{Li21}, we choose an orthonormal basis of $S^2_0(\R^3)$ as follows: 
\begin{eqnarray*}
\vp_1 &=& \frac{1}{\sqrt{2}} e_1 \odot e_2, \\
\vp_2 &=& \frac{1}{\sqrt{2}} e_1 \odot e_3, \\
\vp_3 &=& \frac{1}{\sqrt{2}} e_2 \odot e_3, \\
\vp_4 &=& \frac{1}{2\sqrt{2}} \left( e_1 \odot e_1- e_2 \odot e_2 \right) , \\
\vp_5 &=& \frac{1}{2\sqrt{6}} \left( e_1 \odot e_1+e_2 \odot e_2 -2e_3 \odot e_3 \right), \\
\end{eqnarray*}
where $\{e_1,e_2,e_3\}$ is an orthonormal basis of $\R^3$. 
By \cite[Lemma 3.1]{Li21}, we have that $\mathring{R}(\vp_1,\vp_1) = \mathring{R}(\vp_4,\vp_4) =R_{1212}$, $\mathring{R}(\vp_2,\vp_2)=R_{1313}$, $\mathring{R}(\vp_3,\vp_3)=R_{2323}$, and $\mathring{R}(\vp_5,\vp_5)=\frac{2}{3}(R_{1313}+R_{2323})-\frac{1}{3}R_{1212}$.

Since $\mathring{R}$ is $(3\frac{1}{3}+\delta)$-nonnegative,we get
\begin{eqnarray*}
0& \leq & \mathring{R}(\vp_2,\vp_2)+\mathring{R}(\vp_3,\vp_3)+\mathring{R}(\vp_5,\vp_5) +\delta \mathring{R}(\vp_1,\vp_1)  \\
&=& \frac 5 3 (R_{1313}+R_{2323}) -\left( \frac{1}{3}-\delta \right) R_{1212}  \\
&=& \frac 5 3 R_{33}-\left( \frac{1}{3}-\delta \right) \left( \frac{S}{2}-R_{33}\right)\\
&=& (2 -\delta) R_{33} -\left( \frac{1}{3}-\delta \right)\frac{S}{2}.
\end{eqnarray*}
Thus, we have $ \Ric \geq \frac{1-3\delta}{3(2-\delta)}S$.
\end{proof}


\begin{proof}[Proof of Theorem \ref{thm 3-manifold classification}]
(1). If $M$ has $\a$-nonnegative $\mathring{R}$ for some $\a \in [1,3\frac{1}{3})$, then it has $(3+\delta)$-nonnegative $\mathring{R}$ for $\delta=0$ if $\a \in [1,3]$ and $\delta =3\frac{1}{3}-\a >0$ if $\a \in [3,3\frac{1}{3})$. By Proposition \ref{prop 3.3}, we have $\Ric \geq \frac{1-3\delta}{3(2-\delta)} S$. 

It was conjectured by Hamilton and proved in \cite{CZ00}, \cite{Lott19}, \cite{DSS22}, and \cite{LT22} that a three-manifold with $\Ric \geq \epsilon S$ for some $\epsilon >0$ is either flat or compact. Therefore, $M$ is compact and has positive Ricci curvature unless it is flat. The desired conclusion then follows from Hamilton's classification of three-manifolds with positive Ricci curvature \cite{Hamilton82}.

Since $3\frac{1}{3}$-nonnegativity of $\mathring{R}$ implies nonnegative Ricci curvature, parts (2) and (3) follow from the classification of three-manifolds with nonnegative Ricci obtained by Hamilton \cite{Hamilton86} in the closed case and by Liu \cite{Liu13} in the complete noncompact case, respectively. 

For some $\a \in (3\frac{1}{3}, 5]$, $\a$-nonnegativity of $\mathring{R}$ implies nonnegative scalar curvature. Therefore, part (4) follows from the classification of compact three-manifolds with nonnegative scalar curvature, which is a consequence of the work of Perelman \cite{Perelman1, Perelman2, Perelman3}.
\end{proof}
\section{preserving $\a$-nonnegativity of $\mathring{R}$}

In this section, we show that $\a$-nonnegative/$\a$-positive curvature operator of the second kind is preserved by compact Ricci flows in dimension three for any $\a \in [1,5]$.
In view of Hamilton's ODE-PDE maximum principle (see \cite{Hamilton86}), it suffices to show that Hamilton's ODE
\begin{equation}\label{Hamilton ODE}
    \frac{d R}{dt} = R^2 + R^{\sharp}.
\end{equation}
preserves $\a$-nonnegativity/$\a$-positivity of $\mathring{R}$.

Given $R \in S^2(\wedge^2 \R^3)$, let $a\leq b\leq c$ denote the eigenvalues of $\hat{R}$.
By Theorem \ref{thm eigenvalue 3D}, the eigenvalues of $\mathring{R}$ are given by 
$$\l_- \leq a \leq b \leq c \leq \l_+,$$ 
where $\l_{\pm}$ are defined in \eqref{def lambda pm}. 
We define
\begin{equation}\label{def f_a}
    f_{\a}(R) :=
    \begin{cases} 
    \l_- +(\a-1)a, & \text{ if } \a \in [1,2), \\
    \l_-+a +(\a-2)b, & \text{ if } \a \in [2,3), \\
   \l_-+a+b+(\a-3)c, & \text{ if } \a \in [3,4), \\
    \l_-+a+b+c + (\a-4)\l_+, & \text{ if } \a \in  [4,5).
    \end{cases}
\end{equation}
Clearly, we have
\begin{proposition}\label{prop equiv a positive}
$\mathring{R}$ is $\a$-nonnegative (respectively, $\a$-positive) if and only if $f_\a(R) \geq 0$ (respectively, $f_\a(R) >0$). 
\end{proposition}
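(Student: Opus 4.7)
The plan is to unwind the definition of $\alpha$-nonnegativity and directly apply Theorem \ref{thm eigenvalue 3D}, which already orders all five eigenvalues of $\mathring{R}$ for us. Since $\dim S^2_0(\R^3) = 5$, by Theorem \ref{thm eigenvalue 3D} the five eigenvalues of $\mathring{R}$ in nondecreasing order are
\begin{equation*}
\mu_1 = \lambda_-, \quad \mu_2 = a, \quad \mu_3 = b, \quad \mu_4 = c, \quad \mu_5 = \lambda_+.
\end{equation*}
The chain of inequalities $\lambda_- \leq a \leq b \leq c \leq \lambda_+$ asserted there is exactly what one needs to legitimately identify these as $\mu_1 \leq \cdots \leq \mu_5$ (no relabeling required).

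With that identification, I would simply match each interval in the piecewise definition of $f_\alpha$ against the general definition
\begin{equation*}
\mu_1 + \cdots + \mu_{\lfloor \alpha \rfloor} + (\alpha - \lfloor \alpha \rfloor)\mu_{\lfloor \alpha \rfloor + 1} \geq 0.
\end{equation*}
For $\alpha \in [1,2)$ we have $\lfloor \alpha \rfloor = 1$, so the condition reads $\mu_1 + (\alpha-1)\mu_2 \geq 0$, i.e., $\lambda_- + (\alpha-1)a \geq 0$, which is exactly $f_\alpha(R) \geq 0$ in the first branch. For $\alpha \in [2,3)$, $\lfloor \alpha \rfloor = 2$ and the condition becomes $\lambda_- + a + (\alpha-2)b \geq 0$, the second branch. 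The cases $\alpha \in [3,4)$ and $\alpha \in [4,5)$ match the third and fourth branches of \eqref{def f_a} in the same way. The positivity version is obtained by replacing $\geq$ with $>$ throughout.

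There is essentially no obstacle beyond being careful at the boundary values $\alpha = 2, 3, 4$, where $\alpha - \lfloor \alpha \rfloor = 0$, so either adjacent branch of $f_\alpha$ gives the same value and the ``if and only if'' still holds unambiguously. Because the computation is purely a rewrite of the definition, the result is really a corollary of Theorem \ref{thm eigenvalue 3D}; no extra algebraic manipulation of $\lambda_\pm$ is needed here. This clean reformulation is what makes it feasible in the next section to attack $\a$-nonnegativity via Hamilton's ODE by studying the single scalar quantity $f_\alpha(R)$ rather than the full eigenvalue problem for $\mathring{R}$.
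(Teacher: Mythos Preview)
Your proposal is correct and matches the paper's approach: the paper simply writes ``Clearly, we have'' before stating the proposition, treating it as an immediate consequence of Theorem \ref{thm eigenvalue 3D} and the definition of $\alpha$-nonnegativity, which is exactly the unpacking you carry out.
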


Let $R(t)$, $t\in [0,T)$, be a solution to \eqref{Hamilton ODE}. 
By \cite{Hamilton82}, the eigenvalues of $\hat{R}$ evolve by the ODE system
\begin{align}\label{eq evolution eigenvalues first kind} \nonumber
    \frac{da}{dt} &= a^2+bc, \\ 
    \frac{db}{dt} &= b^2+ac, \\ \nonumber
    \frac{dc}{dt} &= c^2+ac,\\ \nonumber
\end{align}
and the scalar curvature $S(t)$ satisfies 
\begin{equation}\label{eq S evolution}
    \frac{d}{dt} S =2(a^2+b^2+c^2+ab+ac+bc)= |\Ric|^2.
\end{equation}
If $\mathring{R}(0)$ is $\a$-nonnegative for some $\a \in [1,5]$, then $S(0) \geq 0$. By \eqref{eq S evolution}, $S(t)\geq 0$ for each $t\in [0,T)$. Moreover, we have either $S(t) \equiv 0$ or $S(t)>0$ for each $t\in [0,T)$. Since there is nothing to prove if $S(t)\equiv 0$, we may assume $S(t)>0$ in the following discussions.


We first prove that
\begin{proposition}\label{prop nondecreasing}
Let $R(t)$, $t \in [0,T)$, be a solution to \eqref{Hamilton ODE} with $S(t)>0$. 
Then the quantities 
\begin{equation*}
    \frac{a}{S}, \frac{a+b}{S}, -\frac{c}{S}, \frac{\l_-}{S}, -\frac{\l_+}{S}, -\frac{|\Ric|^2}{S^2},
\end{equation*}
are monotone non-decreasing in $t$.
\end{proposition}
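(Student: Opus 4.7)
The plan is to reduce each of the six monotonicity statements to a polynomial inequality in the eigenvalues $a \le b \le c$. Set $P := a+b+c$ and $e := ab+ac+bc$, so that $S = 2P$, $|\Ric|^2 = 2(P^2-e)$, and $P' = P^2 - e$ along \eqref{Hamilton ODE}. Under the running assumption $S > 0$, the sign of $(X/S)'$ agrees with that of $X'S - XS'$ for each numerator $X$. The three ``linear'' cases $X \in \{a,\,a+b,\,-c\}$ are immediate from direct expansion: for instance,
\begin{equation*}
  \tfrac{1}{2}\bigl(a'S - aS'\bigr) \;=\; (a^2+bc)P - a(P^2-e) \;=\; b^2(c-a) + c^2(b-a) \;\ge\; 0,
\end{equation*}
and the analogous computations for $X = a+b$ and $X = -c$ both reduce to $a^2(c-b) + b^2(c-a) \ge 0$. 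Each is manifest from $a \le b \le c$.

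For the remaining three quantities I plan to exploit a single reduction. Set $Q := (a-b)^2 + (b-c)^2 + (a-c)^2 = 2P^2 - 6e$, so that $\l_{\pm} = P/3 \pm \sqrt{2Q}/3$ by Theorem \ref{thm eigenvalue 3D}. A short calculation yields the identity
\begin{equation*}
  \frac{Q}{P^2} \;=\; 12\,\frac{|\Ric|^2}{S^2} \;-\; 4.
\end{equation*}
When $P > 0$, the three claims for $\l_-/S$, $-\l_+/S$, and $-|\Ric|^2/S^2$ therefore collapse to one assertion: $Q/P^2$ is non-increasing in $t$. (If $a = b = c$ at some time then they remain equal under \eqref{Hamilton ODE}, so $Q \equiv 0$ and all six ratios are constant; otherwise I may assume $Q > 0$.)

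To verify $(Q/P^2)' \le 0$, I expand $Q'P - 2QP'$ using $Q' = 4PP' - 6e'$; the $P^2 P'$ terms cancel, leaving $-6(e'P - 2eP')$. Computing $e' = 2(Pe - 3abc)$ from the ODE system and substituting $P' = P^2 - e$ produces $e'P - 2eP' = 2e^2 - 6P\cdot abc$, so that $(Q/P^2)' \le 0$ is equivalent to
\begin{equation*}
  (ab+ac+bc)^2 \;\ge\; 3\,abc\,(a+b+c).
\end{equation*}
This follows from the Schur-type identity $(ab+ac+bc)^2 - 3abc(a+b+c) = \tfrac{1}{2}\bigl[(ab-ac)^2 + (ab-bc)^2 + (ac-bc)^2\bigr] \ge 0$. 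The main obstacle is conceptual rather than computational: noticing that the three nonlinear ratios $\l_-/S$, $-\l_+/S$, and $-|\Ric|^2/S^2$ are all governed by the single scalar $Q/P^2$, which collapses three separate derivative computations into the one elementary symmetric inequality above.
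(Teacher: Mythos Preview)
Your proof is correct and follows essentially the same route as the paper. Both arguments handle the three linear ratios by direct expansion, reduce the monotonicity of $\lambda_\pm/S$ to that of $|\Ric|^2/S^2$ (your $Q/P^2$ is an affine function of $|\Ric|^2/S^2$), and verify the latter via a sum-of-squares identity; your elementary-symmetric-polynomial bookkeeping and the Schur inequality $e^2\ge 3P\cdot abc$ are just a repackaging of the paper's direct computation $\frac{d}{dt}\frac{|\Ric|^2}{S^2}=-\frac{4}{S^3}\bigl(a^2(b-c)^2+b^2(a-c)^2+c^2(a-b)^2\bigr)$, since $2(e^2-3P\cdot abc)=a^2(b-c)^2+b^2(a-c)^2+c^2(a-b)^2$.
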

\begin{proof}
Using \eqref{eq evolution eigenvalues first kind} and \eqref{eq S evolution}, we derive that
\begin{align*}
\frac{d}{dt} \left(\frac{a}{S} \right)&= \frac{2}{S^2}(b^2(c-a)+c^2(b-a)) \geq 0, \\
 \frac{d}{dt} \left(\frac{a+b}{S}\right) &= \frac{2}{S^2}(a^2(c-b)+b^2(c-a)) \geq 0, \\
 \frac{d}{dt} \left(\frac{c}{S}\right) &= \frac{2}{S^2}(a^2(b-c)+b^2(a-c)) \leq 0.
\end{align*}
This implies the desired monotonicity of $\frac{a}{S}$, $\frac{a+b}{S}$, and $\frac{c}{S}$.

Using $|\Ric|^2=2(a^2+b^2+c^2+ab+ac+bc)$ and \eqref{eq evolution eigenvalues first kind}, we compute that
\begin{equation*}
    \frac{d}{dt} |\Ric|^2 =4(a^3+b^3+c^3+3abc+ab^2+a^2b+ac^2+a^2c+bc^2+b^2c), 
\end{equation*}
and
\begin{equation*}
    \frac{d}{dt} \frac{|\Ric|^2}{S^2} 
    =-\frac{4}{S^3}\left((a^2(b-c)^2+b^2(a-c)^2+c^2(a-b)^2)   \right) \leq 0,
\end{equation*}
yielding the monotonicity of $\frac{|\Ric|^2}{S^2}$. 

Finally, the monotonicity of $\frac{\l_{\pm}}{S}$ follows from the identity
\begin{equation*}
    \frac{\l_{\pm}}{S} = \frac{1}{6} \pm \frac{\sqrt{2}}{3}\sqrt{3\frac{|\Ric|^2}{S^2} -1}
\end{equation*}
and the monotonicity of $\frac{|\Ric|^2}{S^2}$. 
\end{proof}

\begin{proposition}\label{prop f_a lower bound}
Let $R(t)$, $t \in [0,T)$, be a solution to \eqref{Hamilton ODE} with $S(t)>0$. Let $f_\a(R)$be the function defined in \eqref{def f_a}.
If $f_\a(0) \geq c S(0)$ for some $c\in \R$, then $f_\a(t) \geq c S(t)$ for all $t\in [0,T)$. 
\end{proposition}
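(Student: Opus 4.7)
The plan is to reduce the statement to a scale-invariant monotonicity. Since $S(t)>0$ throughout $[0,T)$, the inequality $f_\a(R(t))\geq c\,S(t)$ is equivalent to $f_\a(R(t))/S(t)\geq c$, so it suffices to show that $t\mapsto f_\a(R(t))/S(t)$ is monotone nondecreasing along Hamilton's ODE \eqref{Hamilton ODE}.

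To prove this monotonicity, I would first record the two dimension-three identities
\begin{equation*}
    a+b+c=\tfrac{S}{2}, \qquad \l_-+\l_+=\tfrac{S}{3},
\end{equation*}
the first coming from $S=\tr\Ric$ and the second immediate from \eqref{def lambda pm}. Their role is to eliminate $c$ and $\l_+$, which are precisely the two quantities whose $S$-scaled versions Proposition \ref{prop nondecreasing} shows to be \emph{nonincreasing}, rather than nondecreasing, along the flow. Substituting $c=S/2-(a+b)$ and $\l_+=S/3-\l_-$ into the definition \eqref{def f_a}, a direct bookkeeping computation expresses $f_\a$, on each of the four subranges of $\a$, in the form
\begin{align*}
\a\in[1,2):&\quad f_\a=\l_-+(\a-1)\,a,\\
\a\in[2,3):&\quad f_\a=\l_-+(3-\a)\,a+(\a-2)(a+b),\\
\a\in[3,4):&\quad f_\a=\l_-+(4-\a)(a+b)+\tfrac{\a-3}{2}\,S,\\
\a\in[4,5]:&\quad f_\a=(5-\a)\,\l_-+\left(\tfrac{1}{2}+\tfrac{\a-4}{3}\right)S.
\end{align*}
Every coefficient in front of $\l_-$, $a$, and $a+b$ is nonnegative on the corresponding interval. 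Dividing through by $S$ therefore writes $f_\a/S$ as a nonnegative combination of $\l_-/S$, $a/S$, and $(a+b)/S$, plus a constant, and each of these three ratios is nondecreasing along the flow by Proposition \ref{prop nondecreasing}. Hence $f_\a/S$ itself is nondecreasing, which is exactly what we needed.

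No substantive analytic obstacle arises; the entire argument is an algebraic bookkeeping built on top of Proposition \ref{prop nondecreasing}. The only delicate point is remembering the sign convention: Proposition \ref{prop nondecreasing} supplies monotonicity of $-c/S$ and $-\l_+/S$, not $c/S$ and $\l_+/S$, so the two identities above must be applied first to trade $c$ and $\l_+$ for $a+b$ and $\l_-$ before the nonnegative-combination argument can be invoked.
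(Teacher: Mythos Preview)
Your proof is correct and follows essentially the same route as the paper: both show that $f_\a/S$ is nondecreasing along \eqref{Hamilton ODE} by rewriting it, on each subrange of $\a$, as a nonnegative combination of the monotone ratios supplied by Proposition~\ref{prop nondecreasing} (plus a constant). The only cosmetic difference is that for $\a\in[3,4)$ and $\a\in[4,5]$ the paper keeps $c/S$ and $\l_+/S$ with nonpositive coefficients, whereas you first trade them for $(a+b)/S$ and $\l_-/S$ via the identities $a+b+c=S/2$ and $\l_-+\l_+=S/3$; the underlying argument is the same.
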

\begin{proof}
Note that $\frac{f_\a(R)}{S}$ can be written as 
%
\begin{equation*}
    \frac{f_\a(R)}{S}=
    \begin{cases} 
    \frac{\l_-}{S} +(\a-1)\frac{a}{S}, & \text{ if } \a \in [1,2), \\
    \frac{\l_-}{S}+(3-\a)\frac{a}{S} +(\a-2)\frac{a+b}{S}, & \text{ if } \a \in [2,3), \\
    \frac{\l_-}{S}+\frac{1}{2}+(\a-4)\frac{c}{S}, & \text{ if } \a \in [3,4), \\
    \frac{1}{2} + \frac{\a-4}{3} +(5-\a)\frac{\l_+}{S}, & \text{ if } \a \in  [4,5].
    \end{cases}
\end{equation*}
In other words, $\frac{f_\a(R)}{S}$ is the sum of functions that are non-decreasing along \eqref{Hamilton ODE}.
Therefore, $\frac{f_\a(R)}{S}$ is non-decreasing in $t$. 
\end{proof}

Combining Proposition \ref{prop f_a lower bound} and Proposition \ref{prop equiv a positive}, we get that

\begin{proposition}\label{prop RF preserves second kind}
The ODE \eqref{Hamilton ODE} preserves $\a$-nonnegativity/$\a$-positivity of $\mathring{R}$ for any $\a \in [1,5]$. 
\end{proposition}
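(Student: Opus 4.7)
The plan is to combine Propositions \ref{prop equiv a positive} and \ref{prop f_a lower bound}, which have been set up precisely for this purpose; the remaining argument is essentially bookkeeping.

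First I would dispatch the degenerate case. As explained in the paragraph preceding Proposition \ref{prop nondecreasing}, for any solution to \eqref{Hamilton ODE} with $\mathring{R}(0)$ $\a$-nonnegative for some $\a \in [1,5]$, one has the dichotomy $S \equiv 0$ or $S(t) > 0$ throughout $[0,T)$. If $S \equiv 0$ then \eqref{eq S evolution} forces $|\Ric|^2 \equiv 0$, hence $\Ric \equiv 0$, and since in dimension three $R = A \owedge g$ with $A = \Ric - \frac{S}{4} g$ this gives $R \equiv 0$, trivially $\a$-nonnegative at every time. For the $\a$-positive statement the case $S(0) = 0$ cannot occur: otherwise we would be in the degenerate case just described, so $R(0) = 0$, contradicting $f_\a(R(0)) > 0$; thus there $S(t) > 0$ automatically.

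Now assume $S(t) > 0$ on $[0,T)$. For the $\a$-nonnegativity assertion, Proposition \ref{prop equiv a positive} rewrites the hypothesis as $f_\a(R(0)) \geq 0 = 0 \cdot S(0)$; Proposition \ref{prop f_a lower bound} with $c = 0$ then yields $f_\a(R(t)) \geq 0$ for all $t$; and Proposition \ref{prop equiv a positive} translates this back to $\a$-nonnegativity of $\mathring{R}(t)$. For the $\a$-positive assertion, $f_\a(R(0)) > 0$, so setting $c_0 := f_\a(R(0))/S(0) > 0$, Proposition \ref{prop f_a lower bound} propagates $f_\a(R(t)) \geq c_0 S(t) > 0$ for all $t$, which is exactly $\a$-positivity of $\mathring{R}(t)$.

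The only genuine obstacle in this chain was already overcome inside the proof of Proposition \ref{prop f_a lower bound}, namely rewriting $f_\a(R)/S$ as a sum of quantities each shown monotone non-decreasing in Proposition \ref{prop nondecreasing}, using the identities $a+b+c = S/2$ and $\l_+ + \l_- = S/3$ together with a case split on which subinterval of $[1,5]$ contains $\a$ so that every coefficient has the correct sign. After that, Proposition \ref{prop RF preserves second kind} is immediate, and Theorem \ref{thm RF preserves} follows via Hamilton's ODE-PDE maximum principle.
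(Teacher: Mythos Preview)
Your argument is correct and is exactly the paper's approach: the proposition is stated there as an immediate consequence of Propositions \ref{prop equiv a positive} and \ref{prop f_a lower bound}, with the degenerate case $S\equiv 0$ dismissed beforehand. You have simply spelled out the bookkeeping (including the $\a$-positive case via $c_0=f_\a(R(0))/S(0)>0$) more explicitly than the paper does.
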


Next, we prove Theorem \ref{thm RF preserves}.
\begin{proof}[Proof of Theorem \ref{thm RF preserves}]
Take $\a \in [1,5],$ $\varepsilon \geq 0$ and set 
$$K_{\a,\varepsilon}:=\{ R\in S^2_B(\wedge^2 \R^3): f_\a(R) \geq \varepsilon S\geq 0 \}.$$ Clearly, $K_{\a,\varepsilon}$ is closed, $O(3)$-invariant, and invariant under parallel transport. 
To see that $K_{\alpha,\epsilon}$ is convex, we observe that 
\begin{equation*}
    f_\a(R)= \min\{\mathring{R}(\vp_1,\vp_1)+\cdots +  \mathring{R}(\vp_{\lfloor \a \rfloor},\vp_{\lfloor \a \rfloor}) + (\a-\lfloor \a \rfloor) \mathring{R}(\vp_{\lfloor \a \rfloor+1}, \vp_{\lfloor \a \rfloor+1})   \},
\end{equation*}
where the minimum is taken over all orthonormal bases $\{\vp_i\}_{i=1}^5$ for $S^2_0(\R^3)$. Since the composition of the pointwise minimum with a linear map of functionals is concave, we conclude that $f_\a(R)$ is concave. Together with the fact that scalar curvature is a linear function on $S^2_B(\wedge^2 \R^3)$, we conclude that 
$$K_{\a,\varepsilon}=(f_\alpha(R) -\varepsilon S)^{-1}([0,\infty))\cap S^{-1}([0,\infty))$$ 
is a convex set. 
By combining Proposition \ref{prop RF preserves second kind} with Hamilton's ODE-PDE  maximum principle, we conclude that $K_{\a,\varepsilon}$ is preserved by the Ricci flow in dimension three.
\end{proof}

Hamilton \cite{Hamilton82} proved that (compact) Ricci flows in dimension three preserve positivity, two-positivity, and three-positivity of $\hat{R}$, which correspond to positive sectional curvature, positive Ricci curvature, and positive scalar curvature, respectively. 
Here, we note that a similar argument as in the proof of Theorem \ref{thm RF preserves} shows that compact three-dimensional Ricci flows preserve $\a$-nonnegativity of $\hat{R}$ for any $\a \in [1,3]$.
\begin{proposition}\label{prop RF preserves first kind}
Let $(M^3,g(t))$, $t\in [0,T)$, be a compact three-dimensional Ricci flow. If $g(0)$ has $\a$-nonnegative (respectively, $\a$-positive) curvature operator for some $\a \in [1,3]$, then $g(t)$ has $\a$-nonnegative (respectively, $\a$-positive) curvature operator for each $t\in (0,T)$. 
\end{proposition}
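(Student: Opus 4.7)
I would copy the structure of the proof of Theorem \ref{thm RF preserves} essentially verbatim, replacing $\mathring{R}$ by $\hat{R}$. The plan has three pieces: define an analog of $f_\alpha$ for the first kind, use Proposition \ref{prop nondecreasing} to show it satisfies the right monotonicity along Hamilton's ODE \eqref{Hamilton ODE}, then feed this into Hamilton's maximum principle via a convex invariant set.

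First, I would define
\begin{equation*}
g_\alpha(R) := \begin{cases} a + (\alpha-1) b, & \text{if } \alpha \in [1,2), \\ a + b + (\alpha-2) c, & \text{if } \alpha \in [2,3], \end{cases}
\end{equation*}
so that $\hat{R}$ is $\alpha$-nonnegative (resp.\ $\alpha$-positive) iff $g_\alpha(R) \geq 0$ (resp.\ $> 0$). The key step is to verify that $g_\alpha(R)/S$ is monotone non-decreasing in $t$ along \eqref{Hamilton ODE} whenever $S>0$. Using $a+b+c = S/2$, I rewrite
\begin{equation*}
\frac{g_\alpha(R)}{S} = \begin{cases} (2-\alpha)\dfrac{a}{S} + (\alpha-1)\dfrac{a+b}{S}, & \alpha \in [1,2), \\[4pt] (3-\alpha)\dfrac{a+b}{S} + \dfrac{\alpha-2}{2}, & \alpha \in [2,3]. \end{cases}
\end{equation*}
In both cases the coefficients are nonnegative, and Proposition \ref{prop nondecreasing} tells us that both $a/S$ and $(a+b)/S$ are monotone non-decreasing along the ODE, so $g_\alpha(R)/S$ is non-decreasing. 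This immediately yields an analog of Proposition \ref{prop f_a lower bound}: if $g_\alpha(R(0)) \geq \varepsilon S(0)$ then $g_\alpha(R(t)) \geq \varepsilon S(t)$ for all $t \in [0,T)$, and in particular the ODE preserves $\alpha$-nonnegativity and $\alpha$-positivity of $\hat{R}$.

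To pass from the ODE statement to the PDE statement via Hamilton's maximum principle, I set
\begin{equation*}
\widetilde{K}_{\alpha,\varepsilon} := \{ R \in S^2_B(\wedge^2 \R^3) : g_\alpha(R) \geq \varepsilon S \geq 0 \}
\end{equation*}
and check that it is closed, $O(3)$-invariant, and invariant under parallel transport; these are routine. For convexity, I use the variational description
\begin{equation*}
g_\alpha(R) = \min\bigl\{ \hat{R}(\omega_1,\omega_1) + \cdots + \hat{R}(\omega_{\lfloor \alpha \rfloor}, \omega_{\lfloor \alpha \rfloor}) + (\alpha - \lfloor \alpha \rfloor)\, \hat{R}(\omega_{\lfloor \alpha \rfloor+1}, \omega_{\lfloor \alpha \rfloor+1}) \bigr\},
\end{equation*}
where the minimum is taken over all orthonormal sets $\{\omega_i\} \subset \wedge^2 \R^3$. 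As an infimum of linear functionals in $R$, $g_\alpha$ is concave, and since $S$ is linear in $R$, $\widetilde{K}_{\alpha,\varepsilon}$ is the intersection of two half-spaces defined by concave functions, hence convex. Applying Hamilton's ODE--PDE maximum principle \cite{Hamilton86} finishes the proof.

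I do not anticipate a serious obstacle: once the monotonicity of $a/S$ and $(a+b)/S$ from Proposition \ref{prop nondecreasing} is in hand, everything reduces to bookkeeping. The only point that requires a moment of care is the rewriting of $g_\alpha/S$ for $\alpha \in [2,3]$ as a positive combination involving the constant $1/2$ and the non-decreasing quantity $(a+b)/S$, which uses the trace identity $a+b+c=S/2$; this is what forces the cutoff $\alpha \leq 3$, reflecting the fact that $\alpha$-nonnegativity of $\hat{R}$ for $\alpha > 3$ is a vacuous condition in dimension three.
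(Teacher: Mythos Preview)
Your proposal is correct and follows essentially the same route as the paper's proof. The only cosmetic difference is in the $\alpha\in[2,3]$ case: you rewrite $g_\alpha(R)/S = (3-\alpha)\tfrac{a+b}{S} + \tfrac{\alpha-2}{2}$ and invoke the monotonicity of $(a+b)/S$, whereas the paper writes $h_\alpha(R)/S = \tfrac{1}{2} + (\alpha-3)\tfrac{c}{S}$ and uses that $-c/S$ is non-decreasing; since $(a+b)/S + c/S = \tfrac{1}{2}$ is constant, these are the same observation.
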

\begin{proof}
Denote by $a\leq b \leq c$ the eigenvalues of $\hat{R}$. Clearly, $\hat{R}$ is $\a$-nonnegative (respectively, $\a$-positive) if and only if $h_\a(R) \geq 0$ (respectively, $>0$), where
\begin{equation*}
    h_\a(R):=\begin{cases}
    a +(\a-1) b, & \text{ if } \a \in [1,2), \\
    a+b+(\a-2)c,  & \text{ if } \a \in [2,3]
    \end{cases}
\end{equation*}
Since $S(t_0)=0$ forces $S(t) \equiv 0$, we may assume $S(t)>0$ for $t\in [0,T)$. 
Note that 
\begin{equation*}
    \frac{h_\a(R)}{S}=\begin{cases}
    (2-\a)\frac{a}{S} +(\a-1) \frac{a+b}{S}  & \text{ if } \a \in [1,2), \\
    \frac{1}{2}+(\a-3)\frac{c}{S}, & \text{ if } \a \in [2,3].
    \end{cases}
\end{equation*}
The monotonicity of $\frac{a}{S}$, $\frac{a+b}{S}$, and $\frac{c}{S}$ obtained in Proposition \ref{prop nondecreasing} implies that the function $\frac{h_\a(R)}{S}$ is monotone non-decreasing under Hamilton's ODE \eqref{Hamilton ODE}. 
The rest of the proof uses Hamilton's ODE-PDE maximum principle 
as in the proof of Theorem \ref{thm RF preserves}. 
\end{proof}

\section*{Acknowledgments}
The authors would like to thank Professor Xiaodong Cao for many helpful discussions. We also appreciate the anonymous referee for his/her comments which improved the paper's exposition.

\section*{Conflict of Interest}
On behalf of all authors, the corresponding author states that there is no conflict of interest.

\section*{Data Availability Statement}
Data sharing is not applicable to this article as no datasets were generated or analyzed during the current study.

\bibliographystyle{alpha}
\bibliography{ref}

\end{document}